\documentclass[a4paper,12pt]{article}

\usepackage{amssymb,amsmath,amsthm}
\usepackage[all]{xy}

\usepackage[T1]{fontenc}
\usepackage[utf8]{inputenc}
\usepackage[backend=biber]{biblatex}
\usepackage[top=2cm,bottom=2cm,left=2cm,right=4cm,marginparsep=0.3cm,marginparwidth=3cm,includefoot]{geometry}
\usepackage{mathtools}
\usepackage{radon}
\usepackage{enumerate}
\usepackage[colorlinks=true, pdfstartview=FitV, linkcolor=blue, citecolor=blue, urlcolor=blue]{hyperref}
\usepackage[normalem]{ulem}  
\usepackage{mathrsfs}
\usepackage{accents}
\usepackage{mathbbol}
\usepackage{xspace}
\usepackage{marginnote}
\usepackage{tikz-cd}
\usepackage{tocvsec2}
\usepackage[framemethod=default]{mdframed}
\usepackage{tikz}
\usetikzlibrary{patterns}
\usepackage{graphicx}
\usepackage{stmaryrd}
\usepackage{geometry}
\usepackage{hyperref} 
\usepackage{tikz-cd}
\usepackage{stackengine}
\usepackage{scalerel}
\usepackage{stackengine, graphicx}
\usepackage{adjustbox}
\usetikzlibrary{fit}
\usepackage{yhmath}
\usepackage{enumitem}

\numberwithin{equation}{section}
\newcommand\widecheck[1]{%
	\savestack{\tmpbox}{\stretchto{%
			\scaleto{%
				\scalerel*[\widthof{\ensuremath{#1}}]{\kern-.6pt\bigwedge\kern-.6pt}%
				{\rule[-\textheight/2]{1ex}{\textheight}}%
			}{\textheight}%
		}{0.5ex}}%
	\stackon[1pt]{#1}{\scalebox{-1}{\tmpbox}}%
}

\numberwithin{equation}{section}
\usepackage[colorlinks=true, pdfstartview=FitV, linkcolor=blue, citecolor=blue, urlcolor=blue]{hyperref}

\hypersetup{
	hypertexnames=false
}

\newtheorem{proposition-definition}{Proposition-definition}
\newtheorem*{theorem*}{Theorem}
\newtheorem*{proposition*}{Proposition}

\makeatletter
\renewcommand{\l@section}{\@dottedtocline{1}{1.5em}{2.6em}}
\renewcommand{\l@subsection}{\@dottedtocline{2}{4.0em}{3.6em}}
\renewcommand{\l@subsubsection}{\@dottedtocline{3}{7.4em}{4.5em}}
\makeatother

\UseRawInputEncoding

\DeclareMathAlphabet{\mathpzc}{OT1}{pzc}{m}{it}
\DeclareSymbolFontAlphabet{\amsmathbb}{AMSb}%

\newmdenv[
frametitle=Reminder,
skipabove=\topsep,
skipbelow=\topsep,
]{reminder}

\newmdenv[
frametitle=Problem,
skipabove=\topsep,
skipbelow=\topsep,
]{Problem}

\newcommand\restr[2]{{
		\left.\kern-\nulldelimiterspace
		#1
		\right|_{#2} 
}}

\title{The convolution algebra of constructible sheaves}
\author{Mehdi Benchoufi}
\date{\today}

\begin{document}
	
	\maketitle
	
	\begin{center}
		\emph{Dedicated to JLo.}
	\end{center}
	
	\begin{abstract}
		Let \(E\) be a finite-dimensional real vector space. We study invertible objects in the monoidal category of constructible sheaves on \(E\), endowed with the convolution product \(\star\). We show that the inverse of an invertible constructible sheaf \(F\) is the  dual of its antipodal transform. We also prove that a compactly supported constant sheaf is invertible if and only if its
		support is convex. We also introduce a microlocal transform \(B(F)\), obtained by
		projecting the characteristic cycle of $F$ to \(E^*\), and prove that it is compatible with
		convolution. This yields a necessary condition for invertibility.
	\end{abstract}
	
	\section{Introduction and main results}
	
	Let \(E\) be a finite-dimensional real vector space, and let \(k=\mathbb{R}\) or
	\(\mathbb{C}\). We consider the bounded derived category \(\Derb(k_E)\) of sheaves of
	\(k\)-vector spaces on \(E\), endowed with the convolution product
	\[
	F\star G:=Rs_!(F\etens G),
	\]
	where \(s:E\times E\to E\) is the addition map. Our aim is to study invertible
	objects in the monoidal category \((\Derb_{\mathbb{R}\text{-}c}(k_E),\star)\), and to
	relate this question to the geometry of supports and to the microlocal behavior of
	constructible sheaves.
	
	The first result gives a complete geometric characterization of invertible constant
	sheaves with compact support.

	Let \(S\subset E\) be compact. Then the constant sheaf \(k_S\) is invertible with respect to \(\star\) if and only if \(S\) is convex. More precisely, if \(d\) denotes the dimension of the affine hull of \(S\), then the inverse of \(k_S\) is \(k_{I(S)}[d]\), where \(I(S)\) is the interior of \(S\) in its affine hull.
	
	Our second result identifies the inverse of an arbitrary invertible constructible
	sheaf. Denote by \(a:E\to E\) the antipodal map, and write \(F^a:=a_*(F)\).
	
	Let \(F\in \Derb_{\mathbb{R}\text{-}c}(k_E)\) be of compact support and assume that \(F\) is invertible with respect to \(\star\). Then its inverse is \(D_E(F^a)\).
	
	The second part of the paper introduces a microlocal transform \(B(F)\), obtained
	by projecting the characteristic cycle of \(F\) to \(E^*\). This construction is designed
	so as to be compatible with convolution. We define \(\bullet\), a product induced on
	the image of \(B\). Then, for \(F,G\in \Derb_{\mathbb{R}\text{-}c}(k_E)\) with compact support, one has $B(F\star G)=B(F)\bullet B(G)$.
	
	In particular, one obtains the following necessary condition for invertibility. We denote by $\bold{1}$ the unit element of the image of B. Let \(F\in \Derb_{\mathbb{R}\text{-}c}(k_E)\) be of compact support and assume that	\(F\) is invertible with respect to \(\star\). Then $B(F)\bullet B(F)=\bold{1}$.

	\section{Reminder on microlocal geometry}
	
	We follow the notations of~\cite{KS90}. Let $X$ be a real analytic manifold. We denote by $\pi_X : T^*X \to X$ the cotangent bundle to \(X\), by $\dot{\pi}_X : \dot{T}^*X \to X$ the cotangent bundle with the zero-section removed, and by \(T^*_M X\) the conormal bundle to a submanifold \(M \subset X\).
	
	Let $k$ denote the field $\mathbb{R}$ or $\mathbb{C}$. For a locally closed subset $Z$ of $X$, we denote by $k_Z$, the constant sheaf, constant on $Z$ with stalk $k$, and $0$ elsewhere. We denote by $\Derb(k_X)$ the category of sheaves of $k$-modules on $X$, $\Derb_{\mathbb{R}-c}(k_X)$ the category of constructible sheaves and by \(D_c^b(k_X)\) the full triangulated subcategory of \(D^b(k_X)\) consisting of objects with compact support.
	
	Let Y be a real analytic manifold and $f$ a morphism $Y\to X$, we denote by $\oim{f},\opb{f},\eim{f},\epb{f},\hom,\tens$ the six Grothendieck operations.
	
	For $F\in\Derb(k_X)$, we denote by $D_X(F)$ the dual sheaf of $F$ and by $SS(F)$ its singular support. Given a point $x\in X$, we will call the constant sheaf on $x$ a dirac sheaf, and denote it by $\delta_x$.
	
	Let \( \tau: E \to M \) be a finite-dimensional real vector bundle over a real manifold \( M \) of dimension $n$, we denote by \( D^b_{\mathbb{R}_{+}}(k_E) \) denotes the subcategory of conic sheaves with respect to the vector bundle structure. 
	
	\subsection*{Acknowledgments}
	The author would like to thank Pierre Schapira for many fruitful conversations and valuable remarks.
	
	
	\subsection{Reminders on microlocalization and constructible functions}
	
%
%
%
%
%
%
%
%
%
%
%
	
	\subsubsection{Microlocalization functor}\label{sec:specialization}

	Let $i$ be a closed embedding \( i: S \hookrightarrow M \) of a smooth submanifold \( S \) of co-dimension $p$ into a smooth manifold \( M \). We refer to \cite{KS90} for the definition of the specialization functor $\nu_S: D^b(k_M) \to D_{\mathbb{R}_{+}}^b(k_{T_SM})$. We denote by $\mu_S$, the microlocalization functor along $S$, defined as the Fourier-Sato transform of $\nu_S$.
	
%
%
%
%
	We refer to~\cite{KS90} for the functorial properties of $\mu$. In particular, we will make use of the behaviour of $\mu$ under direct image.
	
	Let $f$ be a morphism of manifolds $N\to M$. Let $S,T$ closed submanifolds of $M$ and $N$ respectively, such that $T= f^{-1}S$. The tangent map $Tf$ defines the maps $f_{d}$ and $f_{\pi}$:
	\eqn
	T^{*}N \xleftarrow{f_{d}} N\times_{M} T^{*}M \xrightarrow{f_{\pi}} T^{*}M 
	\eneqn
	which induces the following maps $f_{Td}$ and $f_{T\pi}$:
	\eqn
	T_T^{*}N \xleftarrow{f_{Td}} T\times_{S} T_S^{*}M \xrightarrow{f_{T\pi}} T_S^{*}M 
	\eneqn
	
	\begin{theorem}[{\cite[Prop.~4.3.4]{KS90}}]
		Assume that $f$ is submersive and proper on the support of G. Then for $G \in D^b(k_N)$, we have
		\eq\label{formula:microsupport_mu}
		\reim{f_{T\pi}}f_{Td}^{-1}\mu_T(G)\isoto \mu_S(\reim{f}G)
		\eneq
	\end{theorem}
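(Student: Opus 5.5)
We plan to follow the approach of \cite{KS90}: reduce the statement about $\mu$ to the corresponding statement about the specialization $\nu$, then apply the Fourier--Sato transform. The Fourier--Sato transform commutes with proper direct images and with inverse images of linear maps of vector bundles, up to the usual transposition and shift.

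The first step is to establish the specialization analogue. In the diagram
\[
T_TN \xrightarrow{Tf'} T\times_S T_SM \xrightarrow{} T_SM,
\]
we want a natural isomorphism $\reim{(Tf)}\nu_T(G)\isoto \nu_S(\reim{f}G)$, valid when $f$ is proper on $\operatorname{supp} G$. To get it, we work on the normal deformations $\tilde N_T$ and $\tilde M_S$. The map $f$ induces a morphism $\tilde f:\tilde N_T\to \tilde M_S$ compatible with the projections to $\mathbb{R}$. Moreover, $\tilde f$ is proper on the support of $\opb{p}G$, because the normal deformation is functorial and $T=f^{-1}(S)$. Recall that $\nu_S(F)=\opb{s}\roim{j}\opb{p}F$, where $j$ is the inclusion of the open part $\Omega$ and $s$ is the inclusion of the zero fibre $T_SM$. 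Base change for the proper map $\tilde f$ then lets us commute $\reim{\tilde f}=\roim{\tilde f}$ past both $\opb{s}$ and $\roim{j}$, since $\Omega_N=\tilde f^{-1}(\Omega_M)$ and the zero fibre of $\tilde N_T$ is $\tilde f^{-1}(T_SM)$. This is exactly the content of \cite[Prop.~4.2.4]{KS90}, and we would simply invoke it.

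The second step uses submersivity. Since $f$ is submersive and $T=f^{-1}S$, the map $T_TN\to T\times_S T_SM$ is a surjective linear map of vector bundles over $T$. Dually, $f_{Td}:T\times_S T^*_SM\to T^*_TN$ is injective, so it is a closed embedding. The next step is to apply the Fourier--Sato transform $\wedge$ to the isomorphism from the first step. We factor $Tf$ as the linear surjection $\phi: T_TN\to T\times_ST_SM$ followed by the base change $T\times_ST_SM\to T_SM$. For the base-change part, Fourier--Sato commutes with $\reim{}$ along $T\to S$; this gives $\reim{f_{T\pi}}$. For the linear part, \cite[Prop.~3.7.14]{KS90} gives $(\reim{\phi}F)^\wedge\simeq \opb{({}^t\phi)}F^\wedge\otimes$ a relative orientation shift. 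Here ${}^t\phi=f_{Td}$. The shift by the fibre dimension difference is absorbed, because $\mu_T$ and $\mu_S$ are normalized with their own codimension conventions. The difference of codimensions of $T$ in $N$ and of $S$ in $M$ is zero, since $f$ is submersive. Combining these gives the stated isomorphism.

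The main obstacle is the bookkeeping of shifts and orientation sheaves in the linear step. One must check that the relative dimension of $\phi$ is exactly compensated, and that the orientation sheaf is trivial here, so that no twist appears. The fallback is to verify this on stalks: compute both sides at a point $(y,\xi)$ using the description $\mu_S(F)_{(x,\xi)}\simeq \varinjlim R\Gamma_{U\cap Z}(U;F)$ over suitable neighbourhoods, together with proper base change.
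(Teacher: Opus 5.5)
Your proposal is correct and is the standard argument behind \cite[Prop.~4.3.4]{KS90}, which the paper cites without proof: specialization commutes with the proper direct image, and one then applies the Fourier--Sato transform using its compatibility with base change and with linear maps of vector bundles. Two details need correcting.

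\begin{itemize}
\item Since $f$ is submersive and $T=f^{-1}(S)$, the map $\phi\colon T_TN\to T\times_ST_SM$ is in fact an isomorphism, so $f_{Td}$ is one as well. Moreover, $(R\phi_!F)^\wedge\simeq{}^t\phi^{-1}(F^\wedge)$ holds for any morphism of vector bundles with no shift or orientation twist, and $\mu_S=\nu_S(\cdot)^\wedge$ has no codimension normalization. Your ``main obstacle'' therefore does not arise.
\item The properness of $\tilde f$ near the zero fibre does not follow from functoriality and $T=f^{-1}(S)$ alone; it comes from the same submersivity. Take $f(x)=x^2$, $S=\{0\}$, $G=k_{[-1,1]}$. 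Then $f$ is proper and $T=f^{-1}(S)$, but $Tf=0$ and $\tilde f(x,t)=(tx^2,t)$ is not proper near the zero fibre. In that example the isomorphism also fails.
\end{itemize}
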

	
	\subsubsection{The $\muhom$ and $\mathop{\tens}\limits^{\mu}$ functors}\label{sec:muhom_def}
	
	Let \( F, G \in D^b(k_M) \) be two objects in the bounded derived category of sheaves of \( k \)-modules on \( M \). The microlocalization of the $\rhom$ functor is a functor
	
	\[
	\muhom: D^b(k_M)^{\mathrm{op}} \times D^b(k_M) \to D^b(k_{T^*M}),
	\]
	
	and is defined as:
	
	\[
	\muhom(F, G) :=  \mu_\Delta \rhom(q_2^{-1}F, q_1^! G),
	\]
	
	where:
	\begin{itemize}
		\item \( q_1, q_2: M \times M \to M \) are the projections onto the first and second factors respectively
		\item \( \Delta \subset M \times M \) is the diagonal
		\item we identified \(T^*_\Delta(M \times M)\) and \(T^*M \)
	\end{itemize}
	
	We have the following property:
	\eq\label{formula:direct_image_muhom}
	\supp (\muhom(F, G)) \subset SS(F) \cap SS(G)
	\eneq
	
	Similarly, we define $\mathop{\tens}\limits^{\mu}$ the microlocalization of the $\tens$ functor:
	\eq
	\mathop{\tens}\limits^{\mu}: D^b(k_M)^{\mathrm{op}} \times D^b(k_M) \to D^b(k_{T^*M}),
	\eneq
	\eq
	F \mathop{\tens}\limits^{\mu} G :=  \mu_\Delta(F\etens G)
	\eneq
	
	In case, $M$ is a vector space, we will denote by $\Delta^{a}$ the anti-diagonal and define
	\[
	\muhom^{a}(F, G) :=  \mu_{\Delta^{a}} \rhom(q_2^{-1}F, q_1^! G),
	\]
	\[
	F \mathop{\tens}\limits^{\mu^a} G :=  \mu_{\Delta^{a}} (F\etens G),
	\]

	\subsubsection{Lagrangian cycles and characteristic cycles}\label{sec:lagrangian_cycles}
	
	Let \( M \) be a real analytic manifold of dimension \( n \). Let \( \pi_M: T^*M \to M \) the natural projection, \( \ori_M \) the orientation sheaf on \( M \), and \( \omega_M := \ori_M[n] \) the dualizing complex.
	We denote by \( \ori_{T^*M/M} := \pi^{-1}\ori_M \) the relative orientation sheaf on \( T^*M \).
	
	After~\cite[\S~9.4]{KS90}, the sheaf of Lagrangian cycles, denoted by \( \mathcal{L}_M\), is defined by
	\[
	\mathcal{L}_M
	\;:=\;
	\varinjlim_{\Lambda}
	H^0_{\Lambda}(\pi^{-1}\omega_M),
	\]
	where the limit runs over all closed, conic, subanalytic isotropic subsets of $T^*M $. On define a lagrangian cycle to be a global section of $\mathcal{L}_M$.
	
	Let \( F \in \Derb_{\mathbb{R}\text{-}c}(k_M) \), we refer to ~\cite[\S~9.4.1]{KS90} for the definition of the characteristic cycle of $F$ and the related functorial properties below. Essentially, the authors construct the following natural morphism, by projecting $\muhom(F,F)$ on the base space:
	
	\eq\label{morph:characteristic_cycle}
	\begin{aligned}
		\rhom(F,F) & \isoto \roim{\pi}\muhom(F,F)  \\
		& \isoto \roim{\pi}\rsect_{\on{SS}(F)} \muhom(F,F) \\
		& \simeq \roim{\pi}\rsect_{\on{SS}(F)}\mu_{\Delta}(F\etens DF) \\
		& \to \roim{\pi}\rsect_{\on{SS}(F)}\mu_{\Delta}(\oim{\delta}(F\tens DF)) \\
		& \to \roim{\pi}\rsect_{\on{SS}(F)}\mu_{\Delta}(\oim{\delta}\omega_M) \\		
		& \to \roim{\pi}\rsect_{\on{SS}(F)}(\pi^{-1}\omega_M) \\		
	\end{aligned}
	\eneq
	
	The characteristic cycle of \( F \), denoted by \( \mathrm{CC}(F) \),
	is the image of $id$ $\in \Hom(F,F)$ in $H^0_{\on{SS}(F)}(T^*M;\pi^{-1}\omega_M)$.

	Let \( f: N \to M \) be a morphism of real analytic manifolds. Following the notation of section \ref{sec:specialization}, we define the canonical maps
	\eqn
	T^{*}N \xleftarrow{f_{d}} N\times_M T^{*}M \xrightarrow{f_{\pi}} T^{*}M 
	\eneqn
	
	Let $\Lambda_N\in\mathcal{L}_N$, $\Lambda_M\in\mathcal{L}_M$. 
	
	Assume $f_\pi$ proper on $\opb{f_d}(\Lambda_N)$. Then, there is a nautural morphism  \cite[][Prop.~9.3.2]{KS90}:
	
	\eq\label{morph:image_cycle_push}
	\begin{aligned}
		H^0_{\Lambda_N}(T^*N;\pi^{-1}\omega_N) &\xrightarrow{} H^0_{ f_d^{-1}(\Lambda_N)}(N\times_M T^*M;\pi_{N\times_M T^*M}^{-1}\omega_N)\\
		& \xrightarrow{} H^0_{f_\pi f_d^{-1}(\Lambda_N)}(T^*M;\pi^{-1}\omega_M)
	\end{aligned}
	\eneq
	
	We denote by $\oim{f}\Lambda_N$ the  image of $\Lambda_N$ by the morphism $\ref{morph:image_cycle_push}$.
	
	Assume $f_d$ is proper on $f_\pi^{-1}(supp(\Lambda_M))$. Then, there is a nautural morphism  \cite[][Prop.~9.3.2]{KS90}:
	
	\eq\label{morph:image_cycle_pullback}
	\begin{aligned}
		H^0_{\Lambda_M}(T^*M;\pi^{-1}\omega_M)
		&\xrightarrow{}
		H^0_{f_\pi^{-1}(\Lambda_M)}(N\times_MT^*M;\pi^{-1}\opb{f}\omega_M)\\
		&\xrightarrow{} H^0_{f_df_\pi^{-1}(\Lambda_M)}(T^*N;\pi^{-1}\omega_N)
	\end{aligned}
	\eneq
	
	We denote by $f^*\Lambda_M$ the  image of $\Lambda_M$ by the morphism $\ref{morph:image_cycle_pullback}$.

	Let $F,G$ be objects in $\Derb_{\mathbb{R}\text{-}c}(k_N)$, $\Derb_{\mathbb{R}\text{-}c}(k_N)$ respectively.
	
	If we assume that $f$ is proper on $supp(G)$, then one have \cite[Prop.~9.4.2]{KS90}:
	
	\eq\label{eq:image_cycle}
	\begin{aligned}
		\mathrm{CC}(\roim{f}G) & = \oim{f} \mathrm{CC}(G)\\
	\end{aligned}
	\eneq
	
	If we assume that $f_d$ is proper on $\opb{f_\pi}SS(F)$, then one have \cite[Prop.~9.4.3]{KS90}:
	
	\eq\label{eq:image_cycle}
	\begin{aligned}
		\mathrm{CC}(\opb{f}G) & = f^* \mathrm{CC}(G)\\
	\end{aligned}
	\eneq
	
	Also, following the natural morphism below, one can define the exterior product of two lagrangian cycles:
	
	\eq\label{morph:exterior_image_cycle}
		\begin{split}
			H^0_{\Lambda_M}(T^*M;\pi^{-1}\omega_M)\etens
			H^0_{\Lambda_N}(T^*N;\pi^{-1}\omega_N)
			\\
			\to\;
			H^0_{\Lambda_M\times \Lambda_N}(T^*(M\times N);\pi^{-1}\omega_{M\times N}) .
		\end{split}
	\eneq
	
	And we have:
	\eqn\label{eq:exterior_product_cycle}
		\mathrm{CC}(F\etens G) & = \mathrm{CC}(F) \etens \mathrm{CC}(G)
	\eneqn
	
	Finally, we shall use the behavior of characteristic cycles under the anti-podal map $a$ of $T^*M$. As stated in \cite{KS90}, the map $a$ induces a morphism of sheaves
	\[
	a_{*}:a^{-1}\mathcal{L}_M\longrightarrow \mathcal{L}_M.
	\]
	If $\lambda$ is a Lagrangian cycle, we define its dual by $\lambda^a:=a_{*}(\lambda)$. Let now $F\in \Derb_{\mathbb{R}\text{-}c}(k_M)$, then, by~\cite[Prop.~9.4.4]{KS90}
	\eq\label{prop:antipodal_lagrangian}
		CC(D_EF)=CC(F)^a
	\eneq
	
	A basic family of Lagrangian cycles is constructed in~\cite[Ex.~9.3.4]{KS90} as follows. 
	When \(X=\{\pt\}\), one has \(\mathcal L_X\simeq k\), and we denote by \([\pt]\) the cycle corresponding to \(1\in k\). 
	For an arbitrary manifold \(X\), if \(a_X:X\to\{\pt\}\) is the structure morphism, the cycle of the zero-section is defined by
	\[
	[T^*_X X]:=a_X^{*}[\pt].
	\]
	More generally, if \(i:Y\hookrightarrow X\) is a closed embedding, one defines the conormal cycle of \(Y\) in \(X\) by
	\[
	[T^*_Y X]:=i_{*}[T^*_Y Y].
	\]

	\subsubsection{Reminder on the Radon transform}\label{reminder:radon_transform}
	
	We refer to~\cite{DS96a} for the results of this section. Let $E$ be a real vector space of dimension $n$, and let $P$ be its projective compactification, with coordinates $(x_0:\ldots:x_n)$. We identify $E$ with the standard affine chart $\{x_0\neq 0\}\subset P$ and for $(x,\xi)\in E\times E^*$, we denote by $\langle\xi,x\rangle$ the standard duality pairing.
	
	Let $P^*$ denote the dual projective space. For $\xi\in P^*$, we write $\check{\xi}\subset P$ for the corresponding projective hyperplane. Whenever $\check{\xi}\cap E\neq\varnothing$, we set
	\[
	\mathcal V_\xi:=\check{\xi}\cap E,
	\]
	so that $\mathcal V_\xi$ is an affine hyperplane in $E$; we denote by $V_\xi\subset E$ its associated vector space.
	
	Let \(v : P \times P^* \to P^* \times P\) denote the canonical map defined by
	\[
	v(x,\xi) = (\xi,x)
	\]
	
	We consider the incidence hypersurfaces
	\[
	S:=\{(x,\xi)\in P\times P^*;\ x\in \check{\xi}\},
	\qquad
	v(S):=\{(\xi,x)\in P^*\times P;\ x\in \check{\xi}\},
	\]
	and we denote by $q_1$ and $q_2$ the projections onto the first and second factors, respectively.
	
	For subsets $A\subset P$ and $B\subset P^*$, we define
	\[
	\widehat A:=q_2\bigl(q_1^{-1}(A)\cap S\bigr),
	\qquad
	\widecheck B:=q_1\bigl(q_2^{-1}(B)\cap S\bigr).
	\]
	
	For $F\in D^b(k_P)$ and $G\in D^b(k_{P^*})$, the Radon transforms are defined by
	\[
	Rad(F):=Rq_{2!}\bigl(k_S[n-1]\otimes q_1^{-1}F\bigr),
	\qquad
	Rad^*(G):=Rq_{1!}\bigl(k_{v(S)}[n-1]\otimes q_2^{-1}G\bigr).
	\]
	
	We shall also use the notation
	\[
	\dot T^*P:=T^*P\setminus T^*_P P,
	\qquad
	\dot T^*P^*:=T^*P^*\setminus T^*_{P^*}P^*.
	\]
	There is a canonical contact transformation
	\[
	\chi:\dot T^*P\xrightarrow{\sim}\dot T^*P^*,
	\]
	and the Radon transform exchanges microsupports in the sense that
	\[
	SS(Rad(F))\cap \dot T^*P^*
	=
	\chi\bigl(SS(F)\cap \dot T^*P\bigr).
	\]
	
	We shall use the following local description. Let
	\[
	I:=\{(x,\xi)\in E\times E^*;\ \langle\xi,x\rangle=0\}
	\]
    and $v(I)\subset E^*\times E$ denote its transpose. After identifying
	\[
	T^*E\simeq E\times E^*,
	\qquad
	T^*E^*\simeq E^*\times E,
	\]
	the map $\chi$ is given on $(E\times E^*)\setminus I$ by
	\[
	\chi(x,\xi)=\left(\frac{1}{\langle\xi,x\rangle}\,\xi,\;\langle\xi,x\rangle x\right).
	\]
	
	We also recall the stalk formula
	\[
	Rad(F)_\xi \simeq R\Gamma(\check{\xi};F),
	\qquad \xi\in P^*.
	\]
	
	
	
	\section{Invertible objects in (\( D^b(k_E) \),\(\star\))}
	\subsection{Notations}
	In the following sections, $E$ a real vector space of dimension $n$. For sheaves \( F \) and \( G \) in \( D^b(k_E) \), the convolution is given by
	\[
	F \star G = \reim{s} (F \boxtimes G)
	\]
	where \( s: E \times E \to E \) is the addition map.
	
	We denote by $F^{a}$, the image of $F$ by the anti-podal map. 
	
	Let $x_0\in E$, we denote by $\tau^{x_0}$ the translation of E by $x_0$. For a subset $S\subset T^*E$, we denote by $S^{x_0}$, the translated set by $x_0$.  For sheaf $F\in D^b(k_E)$, we denote by $F^{x_0}$ the translated sheaf by $x_0$, $\oim{\tau{^{x_0}}}(F)$, and by $F^a$ its antipodal sheaf, i.e. the image $\oim{a}(F)$ of $F$ by the anti-podal map $a$. 
	
%
%

	\subsection{Preliminary results on the star operation}

	We shall need the following lemmas:
	
	\begin{lemma}\label{lemm:star_commutation_projection}
		Let \( F,G \in D_c^b(k_E) \) and $u$ an endomorphism of $E$. Then we have:
		\eqn
		\reim{u}(F \star G) \simeq \reim{u}(F) \star \reim{u}(G)
		\eneqn
	\end{lemma}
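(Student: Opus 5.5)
The plan is to use only base change and the projection formula, together with the fact that a linear endomorphism \(u\) commutes with addition: \(u\circ s = s\circ (u\times u)\), where \(u\times u: E\times E\to E\times E\).

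\textbf{Step 1: rewrite the left-hand side.} Composition of proper direct images gives
\[
\reim{u}(F\star G)\simeq \reim{u}\reim{s}(F\etens G)\simeq \reim{(u\circ s)}(F\etens G)\simeq \reim{s}\,\reim{(u\times u)}(F\etens G).
\]

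\textbf{Step 2: the Künneth step.} It remains to show
\[
\reim{(u\times u)}(F\etens G)\simeq \reim{u}F\etens \reim{u}G .
\]
This is the standard Künneth formula for proper direct images, \cite[Prop.~2.6.7]{KS90}. To derive it, factor \(u\times u=(u\times\mathrm{id})\circ(\mathrm{id}\times u)\). For each factor, write \(F\etens G=\opb{q_1}F\tens\opb{q_2}G\) and apply the projection formula. Then apply proper base change along the cartesian square formed by \(u\times \mathrm{id}\) and the projection \(q_1\). Over a field \(k\) no flatness issue arises.

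\textbf{Step 3: conclude.} Combining the two steps gives
\[
\reim{u}(F\star G)\simeq \reim{s}(\reim{u}F\etens\reim{u}G)=\reim{u}F\star\reim{u}G .
\]
Compact support is not actually needed, since only proper direct images occur. It does, however, guarantee that \(\reim{u}=\roim{u}\) on these objects, which is presumably how the lemma will be used later.

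\textbf{Expected obstacle.} The only delicate point is keeping the Künneth isomorphism canonical, so that the isomorphisms are natural in \(F\) and \(G\). This is routine, and I would simply cite \cite{KS90} for the Künneth formula for \(\reim{}\) rather than reprove it.
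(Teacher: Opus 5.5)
Your proof is correct and follows the same approach as the paper. Both use linearity of $u$, i.e.\ $u\circ s=s\circ(u\times u)$, to commute the proper direct images, and then apply the Künneth formula $R(u\times u)_!(F\boxtimes G)\simeq Ru_!F\boxtimes Ru_!G$; your sketch of Künneth via the projection formula and base change, and your remark that compact support is not needed, are accurate additions the paper does not spell out.
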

	\begin{proof}
		Let $\tilde{u}$ denote the linear map $u\times u: E\times E\to E\times E$. Using K\"unneth formula, we have:
		\eqn
		\begin{array}{rcl}
			\reim{u}(F \star G) & := & \reim{u}\reim{s} (F \etens G)\\
			& \simeq & \reim{s}\reim{\tilde{u}} (F \etens G) \\
			& \simeq &  \reim{s}(\reim{u}F \etens \reim{u} G)\\
		\end{array}
		\eneqn
	\end{proof}
	
	In order to prove the following proposition, we will need some reminders on the Radon transform.
	
	Let $L$ be a one dimensional vector space and let us denote by $\mathcal{L}(E,L)$ the space of linear applications from $E$ to $L$. 
	
	$\newline\newline$
	
	We shall use the following criterion for smooth pullback, due to Kashiwara--Schapira \cite[Prop.~5.4.5(ii)]{KS90}. Let
	$f:Y\to X$ be a smooth morphism. As mentionned in \ref{sec:specialization}, we attach $f_d$ to $f$, $Y\times_{X} T^*X \xrightarrow{f_d} T^*Y$. 
	\begin{proposition}\label{prop:smooth_pullback_criterion}
		Let \(G\in D^b(k_Y)\). Then the following conditions are equivalent:
		\begin{enumerate}[label=(\roman*)]
			\item $SS(G)\subset f_d\bigl(Y\times_X T^*X\bigr)$
			\item Locally on \(Y\), there exists \(F\in D^b(k_X)\) such that $G\simeq f^{-1}F$
		\end{enumerate}
	\end{proposition}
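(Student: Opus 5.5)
The two implications are of quite different depth. For (ii)$\Rightarrow$(i) I will use the direct-image/inverse-image estimates for microsupports. For (i)$\Rightarrow$(ii) I will reduce to a product situation and prove that $G$ is constant along the fibres, using the non-characteristic deformation lemma of~\cite{KS90}.

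\emph{(ii)$\Rightarrow$(i).} The microsupport is a local notion, so we may assume $G\simeq f^{-1}F$ on an open set of $Y$. For a smooth (hence non-characteristic) morphism one has
\[
SS(f^{-1}F)= f_d\bigl(f_\pi^{-1}(SS(F))\bigr)
\]
by the smooth case of the inverse-image estimate (\cite[Prop.~5.4.5(i)]{KS90}, or \cite[Prop.~5.4.13]{KS90}). This set is contained in $f_d(Y\times_X T^*X)$. Since the image $f_d(Y\times_XT^*X)$ is itself defined locally on $Y$, the local inclusions glue to (i).

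\emph{(i)$\Rightarrow$(ii), reduction.} Both conditions are local on $Y$. Since $f$ is smooth, around any point we choose coordinates in which $Y=U\times V$ and $X\supset U$, with $f=q$ the first projection. Here $U$ is open and $V\subset\mathbb{R}^m$ is an open ball, or after a further change of coordinates an open cube $\prod_j(a_j,b_j)$. In these coordinates $f_d(Y\times_XT^*X)=\{(x,t;\xi,0)\}$, so (i) says
\[
SS(G)\subset T^*U\times T^*_VV .
\]
Our candidate for $F$ is $F:=Rq_*G$ restricted suitably, and the goal is to show that the adjunction morphism $q^{-1}Rq_*G\to G$ is an isomorphism. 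The cube is a product, so we argue by induction on $m$. We factor $q$ as a composition of projections forgetting one coordinate $t_j$ at a time. At each step the hypothesis is preserved: $Rq_{j*}$ of a sheaf with no $dt$-components, along a proper-on-fibres or interval direction, still has no $dt$-components by the direct-image estimate, after shrinking the cube slightly so that the properness hypotheses hold. This reduces everything to the case $m=1$, $V=(a,b)$.

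\emph{Case $m=1$ (main step).} We must show that for every open $W\subset U$ and every $a<c<c'<b$ the restriction map
\[
R\Gamma\bigl(W\times(a,c');G\bigr)\to R\Gamma\bigl(W\times(a,c);G\bigr)
\]
is an isomorphism. The analogous statement with the left endpoint moving also holds by symmetry. I will apply the non-characteristic deformation lemma \cite[Prop.~2.7.2]{KS90}, or equivalently the microlocal Morse lemma \cite[Cor.~5.4.19]{KS90}, to the family of open sets $\Omega_s=W\times(a,s)$. The boundaries of these sets are the hyperplanes $t=s$, whose conormal directions are $\pm dt$, and these do not meet $SS(G)$ by hypothesis. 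Taking stalks, these isomorphisms give $(Rq_*G)_x\simeq G_{(x,t)}$ through the adjunction morphism, hence $q^{-1}Rq_*G\simeq G$.

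I expect the main obstacle to be making this deformation argument rigorous. The lemma needs a compactness or properness condition on the family of boundaries, which fails for the non-compact fibre $(a,b)$ and non-relatively-compact $W$. I would first handle it by working with relatively compact $W$ and exhausting $(a,b)$ by compact subintervals, then pass to the limit. The limit step requires an application of \cite[Prop.~2.7.1]{KS90}, and the passage to relatively compact $W$ is exactly why the conclusion is only local on $Y$. A secondary point is checking that the coordinate changes and the intermediate pushforwards in the induction preserve the hypothesis of (i); for this I would invoke the direct-image estimate \cite[Prop.~5.4.4]{KS90}.
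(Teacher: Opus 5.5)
The paper gives no argument of its own here; it quotes \cite[Prop.~5.4.5]{KS90}. Your implication (ii)$\Rightarrow$(i) is correct. For (i)$\Rightarrow$(ii), the architecture is the right one: pass to the local product form $Y=U\times V$, then show that $q^{-1}Rq_*G\to G$ is an isomorphism through restriction isomorphisms in the fibre direction. However, the step you single out as the main one does not go through. The remedy you propose also addresses the wrong source of non-compactness.

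\textbf{Why the deformation fails.} Work in the ambient space $W\times(a,b)$. There the moving boundaries of $\Omega_s=W\times(a,s)$ are indeed the slices $t=s$. But the slabs $\overline{\Omega_{s'}\setminus\Omega_s}=W\times[s,s']$ are not compact for any open $W$, whether or not $W$ is relatively compact in $U$. Exhausting $(a,b)$ by compact intervals does not help, because the defect is in the $x$-direction. Suppose instead you work in $U\times(a,b)$ with $W$ relatively compact, so that $\overline{W}\times[s,s']$ is compact. Then $\Omega_s$ acquires the lateral boundary $\partial W\times(a,s]$. Hypothesis (iii) of \cite[Prop.~2.7.2]{KS90} at points $y\in\partial W\times\{s\}$ then requires $(R\Gamma_{(U\setminus W)\times(a,b)}G)_y=0$. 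This condition is governed by covectors $(\nu,0)$ with $\nu$ conormal to $\partial W$, which is exactly what (i) allows in $SS(G)$. For instance, with $U=\mathbb{R}$, $W=(-1,0)$ and $G=k_{[0,\infty)\times(a,b)}$, this stalk is $k$. So no version of your family is both non-characteristic and has compact slabs. The limit argument via \cite[Prop.~2.7.1]{KS90} therefore has no valid finite-stage input.

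\textbf{The missing idea.} Tilt the moving hypersurface, and cut off the fibre on the correct side.
\begin{enumerate}
\item For $a<a'<c'<b$, set $G''=G\otimes k_{\{t>a'\}}$. Along $t=a'$ this adds only covectors with $\tau\le 0$, because the half-space is open.
\item Deform along $\phi_\epsilon=t+\epsilon\psi(x)$, where $\psi\ge 0$ is a proper function on $W$. Then $d\phi_\epsilon$ has $\tau=1$, so it never meets $SS(G'')$.
\item The slabs $\{\sigma\le\phi_\epsilon\le\sigma'\}\cap\{t\ge a'\}$ are compact for $\sigma'<b$.
\item The family meets $\mathrm{supp}\,G''$ in the empty set for $\sigma\le a'$. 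Hence $R\Gamma(\{\phi_\epsilon<c'\};G'')=0$.
\item Letting $\epsilon\to 0$ (an $R\varprojlim$ of zeros) and using the triangle $G''\to G\to G_{\{t\le a'\}}$ gives $R\Gamma(W\times(a,c');G)\simeq R\Gamma(W\times(a,a'];G)$ for every $c'>a'$.
\end{enumerate}
This yields the restriction isomorphisms you need, for every open $W$. The left endpoint is handled symmetrically, using $G\otimes k_{\{t<b'\}}$ and $-t+\epsilon\psi$.

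\textbf{The induction step.} As written, it is not justified: $q_j$ is not proper on the support, and cutting down to a closed cube changes the sheaf. It is also unnecessary. Do the one-variable case over the base $U\times(\text{remaining cube})$. Then transfer hypothesis (i) to $G'=Rq_{m*}G$ using $SS(q_m^{-1}G')=q_{m,d}\,q_{m,\pi}^{-1}SS(G')$ together with the surjectivity of $q_m$.
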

	
	We have
	\begin{lemma}\label{lem:dirac_sheaf_criterion}
		Let  \( G \in D^b(k_E) \) with compact support. After identification of $T^*E$ with $E\times E^*$, assume that $SS(G)\subset \{(x,\xi)\in E\times E^*;\langle\xi,x\rangle=0\}$. Then $supp(F)\subset\{0_E\}$.
	\end{lemma}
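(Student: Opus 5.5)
The plan is to read the hypothesis on \(SS(G)\) as saying that \(G\) is locally constant along the open rays \(\mathbb{R}_{>0}\cdot x\), \(x\neq 0\), and then use the compactness of \(\mathrm{supp}(G)\) to force \(G\) to vanish on each ray. (In the statement, \(F\) should read \(G\); we prove \(\mathrm{supp}(G)\subset\{0_E\}\).)

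\emph{Step 1: geometric setup.} Put \(Y:=E\setminus\{0\}\) and \(X:=Y/\mathbb{R}_{>0}\simeq S^{n-1}\). Let \(f:Y\to X\) be the quotient map. It is a smooth submersion, and its fibers are the open rays \(\{tx;\ t>0\}\), which are connected. Its differential at \(x\) has kernel \(\mathbb{R}x\), spanned by the Euler vector field. Hence
\[
f_d\bigl(Y\times_X T^*X\bigr)=\{(x,\xi)\in Y\times E^*;\ \langle\xi,x\rangle=0\}.
\]
Therefore the hypothesis gives \(SS(G|_Y)\subset f_d(Y\times_X T^*X)\).

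\emph{Step 2: local constancy along the rays.} By Proposition~\ref{prop:smooth_pullback_criterion}, every point of \(Y\) has an open neighbourhood \(U\) with \(G|_U\simeq f^{-1}F_U\) for some \(F_U\in D^b(k_{X})\). We may shrink \(U\) to a product chart \(V\times ]a,b[\) in which \(f\) is the first projection. Then the restriction of \(G\) to each fiber \(f^{-1}(p)\cap U\) is isomorphic to the constant complex with stalk \((F_U)_p\). Consequently, for every \(x\neq 0\), each cohomology sheaf \(H^j\bigl(G|_{\mathbb{R}_{>0}x}\bigr)\) is a locally constant sheaf on the ray \(\mathbb{R}_{>0}x\simeq\mathbb{R}\).

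This is the step that needs the most care. Proposition~\ref{prop:smooth_pullback_criterion} only gives a \emph{local} description, and the isomorphisms \(G|_U\simeq f^{-1}F_U\) need not glue. This is harmless, because local constancy of the cohomology sheaves along a fiber is a local property and does not require gluing. The main thing to verify is that the restriction of \(f^{-1}F_U\) to a fiber really is constant, which follows from \(f^{-1}F_U|_{\{p\}\times]a,b[}\simeq (F_U)_p\otimes k_{]a,b[}\).

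\emph{Step 3: conclusion via compact support.} Fix \(x\neq 0\). Since \(\mathrm{supp}(G)\) is compact, \(G_{tx}=0\) for \(t\gg 0\). A locally constant sheaf on the connected space \(\mathbb{R}_{>0}x\) with one vanishing stalk is zero. Hence \(H^j(G)_{tx}=0\) for all \(t>0\) and all \(j\), that is, \(G|_{\mathbb{R}_{>0}x}\simeq 0\). As \(x\neq 0\) was arbitrary, \(G|_Y\simeq 0\), and so \(\mathrm{supp}(G)\subset\{0_E\}\).
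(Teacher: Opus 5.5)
Your proof is correct and follows the paper's argument. The paper likewise uses the radial submersion $E\setminus\{0_E\}\to S^{n-1}$, identifies $f_d(Y\times_X T^*X)$ with $\{\langle\xi,x\rangle=0\}$, applies Proposition~\ref{prop:smooth_pullback_criterion}, and lets compactness of the support force vanishing along rays. Your version is slightly more careful than the paper's: the paper asserts a global $F$ on the sphere with $G|_{E\setminus\{0_E\}}\simeq f^{-1}F$, although the criterion as quoted is only local, and your observation that local constancy of each $H^j(G)$ on the connected rays already suffices closes that point.
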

	\begin{proof}
		Let us consider the natural projection $f:E\setminus\{0_E\}\to S_E$, where $S_E$ is the unit sphere. Let us observe that $f_d(E\setminus\{0_E\}\times_{S_E} T^*S_E)$ is exactly $\{(x,\xi)\in (E\setminus\{0_E\})\times E^*;\langle\xi,x\rangle=0\}$. 
		
		Now, the condition $(i)$ of proposition \ref{prop:smooth_pullback_criterion} is met, we deduce that there exists $F\in D^b(k_{S_E})$ such that $G|_{E\setminus\{0_E\}}\simeq \opb{f}F$. Let us assume that $supp(G)\not\subset\{0_E\}$, then $F$ is non zero and $G|_{E\setminus\{0_E\}}$ is constant along fibers of $f$, leading to a contradiction since $G$ has compact support.
	\end{proof}

	In the following proof, we will make us of the notation of \ref{reminder:radon_transform}. We have
	\begin{proposition}\label{lemm:radon_transform}
		Let  \( F \in D^b(k_E) \) with compact support. Assume that for any non zero application $u$ of $\mathcal{L}(E,L)$, $\reim{u}(F)$ is the dirac sheaf $\delta_{0_L}$ centered on $0_L$. Then $F$ is a dirac sheaf.
	\end{proposition}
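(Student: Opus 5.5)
The strategy is to read the hypothesis through the Radon transform, deduce a microsupport bound, and then apply Lemma \ref{lem:dirac_sheaf_criterion}. Extend \(F\) by zero to the projective compactification \(P\); since \(F\) has compact support in \(E\), this is harmless.

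\textbf{Step 1: Radon stalks.} Fix a nonzero \(u\in\mathcal{L}(E,L)\) and \(t\in L\). The fiber \(u^{-1}(t)\) is an affine hyperplane \(\mathcal V_\xi\), and every affine hyperplane arises in this way. Since \(F\) has compact support, \(R\Gamma(\check\xi;F)\simeq R\Gamma_c(\mathcal V_\xi;F)\simeq (\reim{u}F)_t\). By the stalk formula \(Rad(F)_\xi\simeq R\Gamma(\check\xi;F)\), the hypothesis \(\reim{u}F\simeq\delta_{0_L}\) gives, up to the shift \([n-1]\):
\begin{itemize}
\item \(Rad(F)_\xi\simeq k\) when \(\mathcal V_\xi\) passes through \(0_E\), i.e. \(\xi\) lies in the hyperplane \(H_0:=\widehat{\{0_E\}}\subset P^*\);
\item \(Rad(F)_\xi\simeq 0\) for the other affine \(\xi\).
\end{itemize}
The hyperplane at infinity meets the compact support trivially, so the stalk there is \(0\) as well. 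Hence \(Rad(F)\) and \(Rad(\delta_{0_E})\) have the same stalks: both are concentrated on \(H_0\) with stalk \(k\).

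\textbf{Step 2: identify \(Rad(F)\) as a sheaf.} Stalks alone do not fix the sheaf, so I will use that on each affine line \(\{u=\cdot\}\)-family the restriction of \(Rad(F)\) is exactly \(\reim{u}F\simeq\delta_{0_L}\) (base change along the incidence correspondence). It follows that \(Rad(F)|_{P^*\setminus\{\infty\}}\) is a sheaf supported on \(H_0\) whose restriction to every pencil of parallel hyperplanes is \(k\) at one point. Its microsupport off the zero section is therefore contained in \(\dot T^*_{H_0}P^*\). I expect to justify this by applying Proposition \ref{prop:smooth_pullback_criterion}, or simply by checking that \(Rad(F)\) is locally constant along \(H_0\) with stalk \(k\), hence \(\simeq k_{H_0}\) up to shift near affine points.

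\textbf{Step 3: transfer back via \(\chi\).} Using \(SS(Rad(F))\cap\dot T^*P^*=\chi(SS(F)\cap\dot T^*P)\) and the fact that \(\chi^{-1}(\dot T^*_{H_0}P^*)=\dot T^*_{\{0\}}P\), I get \(SS(F)\cap\dot T^*E\subset T^*_{\{0\}}E\), at least away from the incidence locus \(I\), where \(\chi\) is given by the explicit formula. Points of \(SS(F)\) lying in \(I\) are not controlled by this argument, so altogether \(SS(F)\subset I\cup T^*_{\{0\}}E\cup T^*_EE\).

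\textbf{Step 4: conclude.} Restrict to \(E\setminus\{0\}\). There \(T^*_{\{0\}}E\) disappears, so the remaining task is to exclude zero-section microsupport off the origin, i.e. to show \(F\) vanishes there. This follows from the stalk computation: for \(x\neq0\), choose a family of hyperplanes through points near \(x\) missing \(0\); the vanishing of \(R\Gamma_c\) on all such hyperplanes, combined with the \(SS(F)\subset I\cup T^*_EE\) bound and compact support, forces \(F|_{E\setminus\{0\}}=0\). A cleaner route is to note that \(SS(F)|_{E\setminus0}\subset I\) once the zero section is excluded, and then Lemma \ref{lem:dirac_sheaf_criterion} gives \(\operatorname{supp}F\subset\{0_E\}\). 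Then \(F\simeq V\otimes\delta_{0}\) for some complex \(V\), and \(\reim{u}F=V\otimes\delta_{0_L}=\delta_{0_L}\) gives \(V\simeq k\).

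The main obstacle is Step 2/4: stalkwise equality of Radon transforms does not immediately control microsupport. This has to be handled carefully via base change on pencils, together with the \(I\) and zero-section issues.
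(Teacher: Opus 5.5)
\textbf{Step 2 is a genuine gap.} Base change along pencils cannot give $SS(\mathrm{Rad}(F))\cap\dot T^*P^*\subset\dot T^*_{H_0}P^*$.

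\emph{Why the pencil argument fails.} A pencil $\{u=t\}_{t\in L}$, completed by the hyperplane at infinity, is a projective line of $P^*$ through the point $\infty^*$ representing the hyperplane at infinity, and every such line is a pencil. Each of these lines meets $H_0$ transversally in one point, and the restriction of $\mathrm{Rad}(F)$ to it is supported at that point. So the pencil data carry no more information than the stalks along $H_0$. A complex supported on $H_0$ with all stalks $k$ need not be locally constant there. For example, take the extension by zero to $P^*$ of $k_U\oplus k_{H_0\setminus U}$ with $U\subset H_0$ a proper nonempty open subset.

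\emph{The bound is essentially the conclusion.} Via $\chi$, the bound you want is equivalent to $SS(F)\cap\dot T^*E\subset T^*_{\{0\}}E$. That is essentially the statement you are trying to prove.

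\emph{Steps 3--4.} You overlook that $I=\{\langle\xi,x\rangle=0\}$ already contains both the zero section $T^*_EE$ and $T^*_{\{0\}}E$. So the bound you reach, $I\cup T^*_{\{0\}}E\cup T^*_EE$, is just $SS(F)\subset I$. There is no zero-section microsupport left to exclude, and your first route in Step 4 is not needed.

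\textbf{The repair uses only your Step 1; this is what the paper does.}
\begin{enumerate}
\item All stalks of $\mathrm{Rad}(F)$ vanish on the open set $P^*\setminus H_0$. Hence $\mathrm{Rad}(F)|_{P^*\setminus H_0}\simeq 0$, and $SS(\mathrm{Rad}(F))$ lies over $H_0$.
\item $P^*\setminus H_0$ is exactly the affine chart $E^*$ of $P^*$, namely the projective hyperplanes not passing through $0_E$.
\item For $\langle\xi,x\rangle\neq 0$, the point $\chi(x,\xi)$ lies over $\xi/\langle\xi,x\rangle$ in this chart. This corresponds to the hyperplane through $x$ conormal to $\xi$, which misses $0$.
\item Therefore $SS(F)\cap\bigl((E\times E^*)\setminus I\bigr)=\emptyset$, that is, $SS(F)\subset I$. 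This is exactly the hypothesis of Lemma~\ref{lem:dirac_sheaf_criterion}.
\end{enumerate}
Your last two sentences, $F\simeq V\otimes\delta_0$ and then $V\simeq k$, then finish the proof. No information about $\mathrm{Rad}(F)$ along $H_0$ is needed. Your points in Step 3 that seemed ``not controlled'' (those in $I$) never need to be controlled.
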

	\begin{proof}
		The proposition is trivial for $n=1$. Assume $n\geq2$.

		Let us endow $E$ with its affine structure and embed it as a local chart of the $n$-dimensional projective space $P$. In the following we indetify $T^*E$ and $E\times E^*$.
		
		Let $\xi\in P^*$ and let us compute the stalks $\mathrm{Rad}(F)_\xi$. Denote by $\mathcal{V}$ the affine hyperplan $\check{\xi}\cap E$, by $V$ its associated vector subspace. Let $u\in\mathcal{L}(E,L)$ such that $u$ is non zero and $ker(u)=V$ and let us denote by $x\in L$ the element such that  $\opb{u}(\{t\})=\mathcal{V}$. Still denoting by $F$ the sheaf $F$ immersed in $P$, we have
		$$
		\mathrm{Rad}(F)_\xi \simeq \rsect(\check{\xi};F) \simeq \rsect(\mathcal{V};F) \simeq \reim{u}(F)_x
		$$
		Then, by hypothesis, either $\xi\cap\widehat{\{0\}}_E=\emptyset$, and $\mathrm{Rad}(F)_\xi \simeq 0$, or $\xi\cap\widehat{\{0\}}_E\neq\emptyset$, and $\mathrm{Rad}(F)_\xi \simeq k$. 
		
		Let $(x,\xi)\in (E\times E^*)\setminus I$, the local coordinate of $T^*P$.  We have $\langle\xi,x\rangle\neq 0$ and so $\xi\cap\widehat{\{0\}}_E=\emptyset$, (in fact, in projective coordinates $\xi=(\langle\xi,x\rangle:\xi_1:...:\xi_n)$). Hence $SS(\mathrm{Rad}(F))\cap \dT{}^{*}_\xi P^*=\emptyset$. 
		
		In the local chart $(E^* \times E) \setminus v(I)$ of $T^*P^*$, we identify the trace of $\dT{}^{*}_\xi P^*$ on this chart with $\{(\eta,x)\in(E^* \times E) \setminus v(I)\eta=\xi\}$. Using $\chi$, we get 
		$$
		SS(F)\cap ((E \times \{\xi\}) \setminus I)=\opb{\chi}(SS(\mathrm{Rad}(F))\cap ((\{\xi\}\times E) \setminus v(I)))=\emptyset
		$$
		
		Hence, we have $(x,\xi)\notin SS(F)$. Applying Lemma \ref{lem:dirac_sheaf_criterion}, we deduce that $F$ is supported by $\{0\}_E$ and there exist integers \(p_j\geq 0\), almost all zero, such that
		\[
		F\simeq \bigoplus_{j\in\mathbb Z} k_{\{0\}_E}^{\oplus p_j}[-j].
		\] 
		By hypothesis the direct image of this sheaf by any $u\in\mathcal{L}(E,L)$ is a diract sheaf, hence $p_0=1$ and $p_j=0$ for $j\neq 0$.
		
		\end{proof}

	Let us now state of a few results that can be seen as regularization properties of the star operation. 
	
	For $a>0$ a postive real number, we denote by $B_a$ the open ball of radius $a$ centered in $\{0\}$. For a set $S\subset E$, let us consider $H_S$ the vector subspace of E generated by its convex hull, we denote by $I(S)$ the interior of $S$ with regards to the induced topology on $H_S$.
	\begin{proposition}
		Let $S$ be a closed convex set and let $a>0$. Then, there is a $\mathcal{C}^{1}$-manifold with boundary, $S_a$, such that 
		$$
		k_S \star k_{\overline{B_a}}  \isoto k_{S_a}
		$$
	\end{proposition}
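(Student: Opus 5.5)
Set \(S_a := S + \overline{B_a} = \{x\in E;\ d(x,S)\le a\}\). This is the closed \(a\)-neighbourhood of \(S\), and the plan is to prove the isomorphism by comparing stalks. There are two things to establish:
\begin{enumerate}
\item the natural morphism \(k_{S_a}\to k_S\star k_{\overline{B_a}}\), or a direct stalk comparison, is an isomorphism;
\item \(S_a\) is a \(\mathcal{C}^1\)-manifold with boundary.
\end{enumerate}

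\textbf{Step 1 (stalks).} The addition map \(s\) is proper on \(S\times\overline{B_a}\) when \(S\) is compact. If \(S\) is only closed, the fibres of \(s\) restricted to \(S\times \overline{B_a}\) are still compact, and \(s\) is proper on this closed set, because \(\overline{B_a}\) is compact. Hence base change applies: for \(x\in E\),
\[
(k_S\star k_{\overline{B_a}})_x\simeq \rsect\bigl(\{(y,z)\in S\times\overline{B_a};\ y+z=x\};k\bigr)\simeq \rsect\bigl(S\cap(x-\overline{B_a});k\bigr)=\rsect\bigl(S\cap \overline{B_a}(x);k\bigr).
\]
The set \(S\cap\overline{B_a}(x)\) is an intersection of two convex sets. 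It is therefore either empty, which happens exactly when \(x\notin S_a\), or a nonempty compact convex set, which is contractible. Its cohomology is thus \(0\) or \(k\) in degree \(0\), so the convolution is a sheaf concentrated in degree \(0\), supported on \(S_a\), with stalks \(k\) there.

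To identify it with \(k_{S_a}\), I will produce a global morphism. Adjunction gives \(k_{S\times \overline{B_a}}\to s^{-1}k_{S_a}\), hence a morphism \(\reim{s}k_{S\times\overline{B_a}}\to \reim{s}s^{-1}k_{S_a}\). A cleaner route is the morphism \(k_{S_a}\to \roim{s}\,k_{S\times\overline{B_a}}=\reim{s}k_{S\times\overline{B_a}}\), which is the unit of adjunction composed with restriction; the equality holds by properness. On stalks this is the restriction \(k=\rsect(\{x\};k)\to\rsect(S\cap\overline{B_a}(x);k)\) coming from the constant map. It is an isomorphism since the fibre is nonempty and contractible, so the morphism is an isomorphism.

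\textbf{Step 2 (regularity), the main obstacle.} I will use the distance function \(d_S(x)=d(x,S)\) to the closed convex set \(S\), taking a Euclidean structure on \(E\). The metric projection \(p_S\) onto \(S\) is well defined and \(1\)-Lipschitz. On \(E\setminus S\) one has \(\nabla d_S(x)=(x-p_S(x))/|x-p_S(x)|\), which is continuous and of norm \(1\). So \(d_S\) is \(\mathcal{C}^1\) on \(E\setminus S\) with nowhere vanishing gradient.

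Since \(a>0\), we have \(S_a=\{d_S\le a\}\) and \(\partial S_a=\{d_S=a\}\subset E\setminus S\). The value \(a\) is a regular value of a \(\mathcal{C}^1\) function, so by the \(\mathcal{C}^1\) implicit function theorem \(S_a\) is a \(\mathcal{C}^1\)-manifold with boundary. Its interior is \(\{d_S<a\}\), which is also the interior of \(S_a\).

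The delicate point is the differentiability of \(d_S\), or equivalently of \(\tfrac12 d_S^2\), whose gradient is \(x-p_S(x)\). I would prove it via the standard convexity estimate
\[
0\le \tfrac12 d_S^2(x+h)-\tfrac12 d_S^2(x)-\langle x-p_S(x),h\rangle\le \tfrac12|h|^2,
\]
which follows from the variational characterization of \(p_S\). Continuity of the gradient then comes from the Lipschitz property of \(p_S\).
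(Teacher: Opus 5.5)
Your proof is correct and follows the paper's argument: set $S_a=\{x;\ d(x,S)\le a\}$, identify $k_S\star k_{\overline{B_a}}$ with $k_{S_a}$, and use differentiability of $d(\cdot,S)$. You also supply the details the paper omits: properness of $s$ on $S\times\overline{B_a}$, contractible compact convex fibres, a global comparison morphism via adjunction, and the $\mathcal{C}^1$ estimate for $\tfrac12 d_S^2$. Two small remarks: restricting differentiability to $E\setminus S$ is the accurate form of the paper's claim, since $d(\cdot,S)$ is not differentiable at points of $\partial S$ but $\partial S_a\subset E\setminus S$ because $a>0$; and your discarded first attempt $k_{S\times\overline{B_a}}\to s^{-1}k_{S_a}$ is not a natural morphism, as the natural map goes $s^{-1}k_{S_a}\to k_{S\times\overline{B_a}}$, which is the one your adjunction route actually uses.
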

	\begin{proof}
		Indeed, if we denote by $S_a$ the set $\{x\in E| d(x,S)\leq a\}$, where $d$ stands for the euclidian distance of $E$, then $k_S \star k_{\overline{B_a}}\isoto k_{S_a}$. By hypothesis, $S$ is closed convex set, and hence the application $x\mapsto d(x,S)$ is differentiable, which allows us to conclude.
	\end{proof}
	
	Let $W$ be a vector subspace of $E$ and let $p_W$ be a linear projection from $E$ onto $W$.  We set $V=ker(p_W)$. We have:
	\begin{lemma}\label{prop:proj_microsupport}
		Let \( F \in D^b(k_E) \), we have
		$$
		F\star k_V \isoto p_W^{-1}\reim{p_W}F
		$$
	\end{lemma}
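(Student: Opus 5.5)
The statement is $F\star k_V \simeq p_W^{-1}\reim{p_W}F$. The plan is to unwind the convolution with $k_V$ as a base change along the projection $p_W$. Write $E\simeq W\times V$ via the decomposition $x=(p_W(x),x-p_W(x))$. Let $i_V:V\hookrightarrow E$ be the inclusion, so that $k_V\simeq \reim{i_V}k_V$. Then
\[
F\star k_V=\reim{s}(F\etens \reim{i_V}k_V)\simeq \reim{s}\reim{(\mathrm{id}\times i_V)}\opb{q_1}F \simeq \reim{\sigma}\opb{q_1}F ,
\]
where $q_1:E\times V\to E$ is the first projection and $\sigma:E\times V\to E$ is $\sigma(x,v)=x+v$. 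This uses the projection formula, or simply the identity $F\etens k_V\simeq \opb{q_1}F$ on $E\times V$.

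Next, change coordinates on $E\times V$ by the linear automorphism $\phi(x,v)=(x,x+v)$. Its inverse is $(x,y)\mapsto(x,y-x)$, which sends $E\times V$ onto the closed subset $Z:=\{(x,y)\in E\times E;\ y-x\in V\}=\{(x,y);\ p_W(x)=p_W(y)\}=E\times_W E$. Under $\phi$, the map $\sigma$ becomes the second projection $r_2:Z\to E$ and $q_1$ becomes the first projection $r_1:Z\to E$. Hence
\[
F\star k_V\simeq \reim{r_2}\opb{r_1}F .
\]

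Finally, $Z=E\times_W E$ is the fiber product, and the square formed by $r_1$, $r_2$ and $p_W$ (twice) is cartesian. Proper base change then gives
\[
\reim{r_2}\opb{r_1}F\simeq \opb{p_W}\reim{p_W}F ,
\]
which is the claim. All isomorphisms are canonical, since they come from base change and the projection formula.

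No step is a real obstacle. The only point needing care is checking that $\phi$ identifies $E\times V$ with the fiber product compatibly with both maps, and that base change applies with $\reim{}$ without any properness hypothesis, which is true for proper base change with proper direct images. No compact-support assumption on $F$ is needed.
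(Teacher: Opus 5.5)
Your proof is correct and follows the paper's argument. The paper also rewrites $F\star k_V$ as $\reim{s_V}\pi^{-1}F$ on $E\times V$ and applies base change to the square formed by $\pi$, $s_V$ and $p_W$; your map $\phi(x,v)=(x,x+v)$, which identifies $E\times V$ with $E\times_W E$, simply spells out why that square is cartesian, a point the paper only asserts.
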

	
	\begin{proof}
		Let us denote by $\pi$ the projection from $E\times V$ to $E$, $j$ the natural inclusion of $E\times V$ in $E\times E$, $s_V$ the restriction to $E\times V$ of $s$ and let us consider the cartesian diagram:
		
		\eqn
		\xymatrix@C-0pc@R+0pc{
			& E \times E  \ar[ddr]^-{s}  & \\
			& E \times V \ar[dl]_-{\pi}  \ar[dr]_-{s_V} \ar@{^{(}->}[u]^-{j} & \\  
			E \ar[dr]_-{p_W} & &  E \ar[dl]^-{p_W}\\
			& W & \\
		}
		\eneqn
		Because $V$ and $W$ are supplementary in $E$, this diagram commutes and we have the following isomorphisms:
	
		\eqn
		\begin{array}{rcl}
				F\star k_V & := & \reim{s} (F \boxtimes k_V)	 \\
				& \simeq & \reim{s} \oim{j}\pi^{-1} F  \\
				& \simeq & \reim{s_V} \pi^{-1} F  \\
				& \simeq & p_W^{-1}\reim{p_W}F
		\end{array}
		\eneqn
	\end{proof}
	
	\remark
	Following notations of section \ref{sec:specialization}, we denote by $p_{Wd}$ and $p_{W\pi}$ the maps
	\eqn
	T^{*}(E) \xleftarrow{p_{Wd}} E\times_W T^{*}W \xrightarrow{p_{W\pi}} T^{*}W 
	\eneqn
	Let \( F \in D^b(k_E) \). Then, appliying \cite[][Prop.~5.4.5]{KS90}, we get $SS(F\star k_V)\subset p_{Wd}(E\times_W T^{*}W)$. We'll get below a somewhat more general result. \\

	
	Let us consider now the convolution by a constant sheaf supported by a closed submanifold $S$ of $E$. Following notations of section \ref{sec:specialization}, we denote by $s_d,s_\pi$ and $s_{\Delta,d}, s_{\Delta,\pi}$ the maps:
	
	\eqn
		T^{*}(E\times E) \xleftarrow{s_{d}} (E\times E)\times_s T^{*}E \xrightarrow{s_{\pi}} T^{*}E 
	\eneqn
	\eqn
	T_{ \Delta_{E}^{a}}^{*}(E\times E) \xleftarrow{s_{\Delta,d}} \Delta_{E}^{a}\times_{0} T_{0}^{*}E \xrightarrow{s_{\Delta,\pi}} T_{0}^{*}E 
	\eneqn
	
	Let us apply formula (\ref{formula:microsupport_mu}) for the sheaves $ \rhom(q_2^{-1}G^{a}, q_{1}^!F)$ and $ F\etens G$, for the maps:
	\[
	\begin{array}{cccc}
		s\colon & E\times E & \longrightarrow & E \\
		& \cup & & \cup \\
		s_{\Delta}\colon & \Delta & \longrightarrow & \{0\}
	\end{array}
	\]

	\begin{lemma}\label{lemm:microloc_star}
		Let $x_0\in E$ and let \( F,G \in D_c^b(k_E)\), then we have:
		\eq
		\reim{s_{\Delta,\pi}}s_{\Delta,d}^{-1}(F^{x_0}\mathop{\tens}\limits^{\mu^a} G) \isoto \mu_{\{x_0\}}(F \star G)
		\eneq
		
	\end{lemma}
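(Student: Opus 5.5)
The plan is to apply the direct image formula (\ref{formula:microsupport_mu}) (\cite[Prop.~4.3.4]{KS90}) to the addition map $s\colon E\times E\to E$, the sheaf $F^{x_0}\etens G$ (or rather a suitable translate), and the pair of closed submanifolds $S=\{x_0\}\subset E$ and $T=\opb{s}(\{x_0\})\subset E\times E$. The statement is phrased in terms of the antidiagonal $\Delta^a_E=\opb{s}(0)$ and the point $0$, so I first reduce to $x_0=0$ by translation. If $\tau$ denotes translation by $x_0$ on the first factor of $E\times E$, then $s\circ(\tau\times\mathrm{id})=\tau^{x_0}\circ s$. Hence
\[
\reim{\tau^{x_0}}(F\star G)\simeq F^{x_0}\star G ,
\]
and in the same way $\opb{s}(\{x_0\})$ is the translate of $\Delta^a_E$. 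Since $\mu$ commutes with diffeomorphisms, $\mu_{\{x_0\}}(F\star G)$ corresponds to $\mu_{\{0\}}$ of the appropriately translated convolution under $T^*_{x_0}E\simeq T^*_0E\simeq E^*$. I will fix the convention for which factor carries the translation so that the left-hand side involves $F^{x_0}\mathop{\tens}\limits^{\mu^a}G=\mu_{\Delta^a}(F^{x_0}\etens G)$, as in the statement.

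Next I check the hypotheses of the formula for the translated sheaf $H:=F^{x_0}\etens G$ (up to the translation convention) on $N=E\times E$, $M=E$, $S=\{0\}$, $T=\Delta^a_E=\opb{s}(0)$.
\begin{itemize}
\item The map $s$ is a linear surjection, hence a submersion.
\item Since $F,G\in D^b_c(k_E)$ have compact supports, $\supp H$ is compact, so $s$ is proper on $\supp H$.
\item The condition $T=\opb{f}(S)$ holds by definition of $\Delta^a_E$.
\end{itemize}
The formula then gives
\[
\reim{s_{\Delta,\pi}}\,s_{\Delta,d}^{-1}\,\mu_{\Delta^a}(H)\isoto \mu_{\{0\}}(\reim{s}H).
\]
Moreover $\reim{s}H=F^{x_0}\star G$ by definition of the convolution. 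Writing $\mu_{\Delta^a}(H)$ as $F^{x_0}\mathop{\tens}\limits^{\mu^a}G$ gives the left-hand side of the lemma.

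It remains to identify the maps $s_{\Delta,d}$ and $s_{\Delta,\pi}$ explicitly, which makes sense of the statement. We have $T^*_0E\simeq E^*$ and $T^*_{\Delta^a}(E\times E)\simeq\{(x,-x;\xi,\xi)\}$. The map $\Delta^a_E\times_0 T^*_0E\to T^*_{\Delta^a}(E\times E)$ is ${}^t s'$, namely $(x,\xi)\mapsto(x,-x;\xi,\xi)$, and $s_{\Delta,\pi}$ is the projection to $\xi$. The properness of $s_{\Delta,\pi}$ on the support of $s_{\Delta,d}^{-1}\mu_{\Delta^a}(H)$ comes from compactness of $\supp H\cap\Delta^a$, so $\reim{s_{\Delta,\pi}}=\roim{s_{\Delta,\pi}}$ there, matching the conventions of \cite{KS90}.

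The main obstacle is bookkeeping rather than substance. One has to make sure the translation by $x_0$ is placed on the correct factor, so that the fiber $\opb{s}(x_0)$ after translation is exactly the antidiagonal used in defining $\mathop{\tens}\limits^{\mu^a}$. One also has to check that the identification $T^*_{x_0}E\simeq T^*_0E$ induced by the translation is compatible with $\mu$. The latter follows from the functoriality of $\mu_S$ under isomorphisms, \cite[Prop.~4.3.4]{KS90} applied to an isomorphism. I would settle the conventions by writing out $\opb{s}(x_0)=\{(x+x_0,-x)\}$ and checking that the translation $(y,z)\mapsto(y-x_0,z)$ sends it to $\Delta^a_E$.
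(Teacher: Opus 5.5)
Your proposal is correct and follows the paper's own argument: a direct application of \cite[Prop.~4.3.4]{KS90}, i.e.\ formula~\eqref{formula:microsupport_mu}, to the addition map $s$ with $N=E\times E$, $M=E$, $T=\Delta^a_E=\opb{s}(0)$ and $S=\{0\}$, after translating $x_0$ to $0$; you also check the hypotheses the paper leaves implicit (submersion, properness on the compact support, $T=\opb{s}(S)$). One caveat on the bookkeeping: if $\tau^{x_0}(x)=x+x_0$, then your map $(y,z)\mapsto(y-x_0,z)$ produces $F^{-x_0}\etens G$, and putting the translation on the second factor instead gives $F\etens G^{-x_0}$, so choosing the factor cannot give literally $F^{x_0}$ --- only the convention $\tau^{x_0}(x)=x-x_0$ does, and that is the convention under which the paper's identification $\mu_{\{x_0\}}(F\star G)\simeq\mu_{\{0\}}(F^{x_0}\star G)$ holds.
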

	\begin{proof}
		This is an immediate application of formula \ref{formula:microsupport_mu}, where we consider the $N=E\times E$, $M=E$, $T=\Delta_E^a$, $S=\{0\}$ and the application $s$. Obviously, we have $\mu_{\{x_0\}}(F \star G)\simeq\mu_{\{0\}}(F^{x_0} \star G)$.
	\end{proof}
	
		%
	Let us denote by $p$ the projection $T^*E\simeq E\times E^*\to E^*$. We have
	\begin{proposition}\label{prop:microsupport_star}
		 Let \( F,G \in D_c^b(k_E)\) and $x_0\in E$. We have, (identifying $T_{x_0}^*E$ and $E^*$):
		 $$
		 SS(F\star G) \cap T_{x_0}^*E \subset p(SS(F^{x_0})\cap SS(G^a))
		 $$
	\end{proposition}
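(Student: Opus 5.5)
The plan is to bypass the microlocal formula of Lemma~\ref{lemm:microloc_star} and use the direct image estimate for the microsupport, \cite[Prop.~5.4.4]{KS90}, together with the exterior product estimate, \cite[Prop.~5.4.1]{KS90}. Since $F$ and $G$ have compact support, $F\etens G$ has compact support in $E\times E$. Hence the addition map $s$ is proper on $\supp(F\etens G)$, and $F\star G=\reim{s}(F\etens G)=\roim{s}(F\etens G)$. We then get
\[
SS(F\star G)\subset s_\pi\bigl(s_d^{-1}(SS(F\etens G))\bigr),\qquad SS(F\etens G)\subset SS(F)\times SS(G).
\]

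Next we compute $s_d$ and $s_\pi$ in the identification $T^*E\simeq E\times E^*$. Since $s$ is linear, its transpose sends $\xi\in E^*$ to $(\xi,\xi)\in E^*\times E^*$. So a point of $(E\times E)\times_E T^*E$ is a triple $(x,y;\xi)$, with
\[
s_d(x,y;\xi)=((x,\xi),(y,\xi)),\qquad s_\pi(x,y;\xi)=(x+y,\xi).
\]
Consequently $(x_0,\xi)\in SS(F\star G)$ forces the existence of $x,y\in E$ with
\[
x+y=x_0,\qquad (x,\xi)\in SS(F),\qquad (y,\xi)\in SS(G).
\]

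It remains to rewrite this condition in terms of $F^{x_0}$ and $G^a$. Set $z=x-x_0=-y$; this is a common base point. The translation $\tau^{x_0}$ and the antipodal map $a$ are affine, so their action on $T^*E$ is explicit, and \cite[Prop.~5.4.5]{KS90} (or simply the behaviour of $SS$ under isomorphisms) gives it. With the paper's convention $F^{x_0}=\oim{\tau^{x_0}}F$, the point $(x,\xi)\in SS(F)$ corresponds to a point of $SS(F^{x_0})$ over $z$ with the same covector, after fixing the sign convention for the translation. Likewise $(y,\xi)\in SS(G)$ corresponds to a point of $SS(G^a)$ over $-y=z$ with covector $\pm\xi$. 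Thus both sheaves contribute covectors above the same base point $z$, and projecting by $p$ returns $\xi$. This gives $\xi\in p\bigl(SS(F^{x_0})\cap SS(G^a)\bigr)$.

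The main obstacle I anticipate is exactly this sign bookkeeping. The antipodal map acts as $(y,\eta)\mapsto(-y,-\eta)$ on $T^*E$, and $SS$ is conic but not symmetric under $\eta\mapsto-\eta$. So I must check carefully whether the covector attached to $G^a$ is $\xi$ or $-\xi$, and whether the translation by $x_0$ should be $+x_0$ or $-x_0$. If a sign mismatch survives, the statement has to be read with the identification of $T^*_{x_0}E$ with $E^*$ that absorbs it. This is consistent with the anti-diagonal $\Delta^a$ and the map $s_{\Delta,d}$ appearing in Lemma~\ref{lemm:microloc_star}. As a cross-check I would also derive the inclusion from that lemma: the support of $\mu_{\{x_0\}}(F\star G)$ is controlled by the support of $F^{x_0}\mathop{\tens}\limits^{\mu^a}G$. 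That support lies over the anti-diagonal conormal in the intersection of the microsupports, by the analogue of (\ref{formula:direct_image_muhom}). This route fixes the conventions at the cost of relating $\supp\mu_{\{x_0\}}(H)$ to $SS(H)\cap T^*_{x_0}E$, which is why I prefer the first route for the actual proof.
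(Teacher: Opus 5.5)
You follow the same route as the paper: the direct image and exterior product estimates for the microsupport, then the explicit form of $s_d$ and $s_\pi$. Everything up to the existence of $x,y$ with $x+y=x_0$, $(x,\xi)\in SS(F)$, $(y,\xi)\in SS(G)$ is correct. The gap is the last step, which you assert without doing the bookkeeping you yourself identify as the main obstacle.

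Carry it out. The base points match only when $F^{x_0}$ is the translate sending $x_0$ to $0$, which is the convention implicit in Lemma~\ref{lemm:microloc_star}. Then $(x,\xi)\in SS(F)$ gives $(z,\xi)\in SS(F^{x_0})$ with $z=-y$. But by your own formula $(y,\eta)\mapsto(-y,-\eta)$, the point $(y,\xi)\in SS(G)$ gives $(z,-\xi)\in SS(G^a)$, not $(z,\xi)$. Two points over the same base point with opposite covectors do not by themselves produce a point of $SS(F^{x_0})\cap SS(G^a)$. So ``projecting by $p$ returns $\xi$'' does not follow. The sign also cannot be absorbed into the identification $T^*_{x_0}E\simeq E^*$. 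The mismatch is between the two sets on the right-hand side, and changing that identification only replaces $\xi$ by $-\xi$ on the left.

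In fact the inclusion as stated is false, so this step cannot be completed. Take $E=\mathbb{R}$, $x_0=0$, $F=\delta_0$, $G=k_{[0,1]}$. Then $F\star G=k_{[0,1]}$, whose microsupport over $0$ is $\{\xi\geq 0\}$. But $G^a=k_{[-1,0]}$ has only covectors $\xi\leq 0$ over $0$, so the right-hand side is $\{\xi\leq 0\}$. Exchanging the roles ($F=k_{[0,1]}$, $G=\delta_0$) shows that the opposite identification fails too.

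What your argument actually proves is
\[
SS(F\star G)\cap T^*_{x_0}E\subset\bigl\{\xi\in E^*;\ \exists z\in E,\ (z,\xi)\in SS(F^{x_0}),\ (z,-\xi)\in SS(G^a)\bigr\}.
\]
That is, $p$ applied to the intersection of $SS(F^{x_0})$ with the fiberwise antipodal image of $SS(G^a)$, or equivalently with $\{(-y,\eta);\ (y,\eta)\in SS(G)\}$.

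The paper's proof has exactly the same defect: its final inclusion passes from $\Delta^a$ to $SS(G^a)$ without tracking the covector sign. Your proposed cross-check via $\mu_{\Delta^a}$ would expose the sign rather than remove it. Indeed $T^*_{\Delta^a}(E\times E)=\{((x,-x),(\xi,\xi))\}$, so the support bound for $\mu_{\Delta^a}(F^{x_0}\boxtimes G)$ again yields $(x,\xi)\in SS(F^{x_0})$ and $(-x,\xi)\in SS(G)$. You should resolve the sign and state this corrected inclusion instead of asserting the original one.
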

	\begin{proof}
		 Applying \cite[][Prop.~5.4.4]{KS90} and \cite[][Prop.~5.4.1]{KS90}, we get 
		 \begin{align*}
		 	SS(F\star G)&=SS(\reim{s}(F\etens G))\\
		 	& \subset s_{\pi}s_{d}^{-1} SS(F\etens G)\\
		 	& \subset s_{\pi}s_{d}^{-1} (SS(F)\times SS(G))\\
		 	& \subset s_{\pi}(SS(F)\times_{E^*} SS(G)\times_sE)
		 \end{align*}
		 Hence,
		  \begin{align*}
		 	SS(F\star G) \cap T_{x_0}^*E  &\subset s_{\pi}|_{s^{-1}(\{x_0\})\times T_{x_0}^*E}(SS(F)\times_{E^*} SS(G)\times_sE)\\
		 	&\subset s_{\pi}|_{\Delta^a\times T_{0}^*E}(SS(F^{x_0})\times_{E^*} SS(G)\times_sE)\\
		 	&\subset p(SS(F^{x_0})\cap SS(G^a))\\
		 \end{align*}
	\end{proof}
	
	
	
	\subsection{Description of inversible objects for \(\star\)}
	
	Let us first assume that $E$ is vector space of dimension $1$. We recall the following structure theorem for constructible sheaves:
	\begin{theorem}[Guillermou, Cor.~7.3 {\cite{Guill16}}]\label{th:structure_sheaf_dim1}
		Let $F$ be a sheaf in $\Derb_{\mathbb{R}-c}(k_E)$ with compact support.
		Then there exists a finite family of intervals \(\{I_\alpha\}_{\alpha\in A}\) such that, up to a shift
		\[
		F \simeq \bigoplus_{\alpha\in A} k_{I_\alpha}.
		\]
		Moreover, such a decomposition is unique (up to reindexing).
	\end{theorem}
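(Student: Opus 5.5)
The plan is to reduce the statement to the representation theory of a quiver of type $A$, and then use two facts: hereditary algebras split complexes into shifted cohomology, and Gabriel's theorem classifies indecomposables.

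\textbf{Step 1 (fixing a stratification).} Since $F$ is $\mathbb{R}$-constructible with compact support, there are finitely many points $x_1<\dots<x_m$ such that every cohomology sheaf $H^j(F)$ is locally constant on each of the open intervals $(-\infty,x_1),(x_1,x_2),\dots,(x_m,+\infty)$. Compact support forces these sheaves to vanish on the two unbounded strata.

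\textbf{Step 2 (the quiver description).} Sheaves constructible with respect to this stratification are described by linear data. To each point $x_i$ we attach its stalk, and to each open interval we attach its space of sections. The arrows are the restriction maps from the stalk at $x_i$ to the sections on the two adjacent intervals. This identifies the abelian category of such sheaves with $\mathrm{rep}(Q)$, where $Q$ is a zigzag-oriented quiver of type $A_{2m+1}$. By the standard equivalence for constructible sheaves with respect to a fixed (here contractible-strata) stratification, as in Kashiwara--Schapira, we obtain $D^b_{\{x_i\}}(k_E)\simeq D^b(\mathrm{rep}\,Q)$. Checking this derived equivalence carefully is the step I expect to be the main obstacle. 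The point is that the higher $\mathrm{Ext}$ groups computed in sheaves agree with those computed in quiver representations. If this is awkward to establish, I would instead verify directly that $\mathrm{Ext}^2$ vanishes between constructible sheaves for this stratification. The method is to resolve the skyscraper sheaf $k_{\{x_i\}}$ and the sheaves $k_{(x_i,x_{i+1})}$ by two-term complexes, and to use that every stratum is contractible of cohomological dimension at most $1$.

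\textbf{Step 3 (splitting into shifted cohomology).} $\mathrm{rep}(Q)$ is hereditary, that is, $\mathrm{Ext}^{\ge 2}=0$. In a hereditary setting every bounded complex is isomorphic to the direct sum of its shifted cohomology objects. Hence
\[
F\simeq\bigoplus_j H^j(F)[-j].
\]
This reduces the statement to the case where $F$ is a single constructible sheaf in degree $0$.

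\textbf{Step 4 (Gabriel's theorem).} By Gabriel's theorem, every finite-dimensional representation of an $A$-type quiver is a direct sum of interval representations: $k$ on a connected segment of vertices, identity maps inside the segment, and $0$ elsewhere. Under the dictionary of Step 2, a segment starting and ending at vertices corresponding to points or to open intervals is exactly $k_I$ for an interval $I$ whose endpoints are open or closed accordingly. Compactness of $\mathrm{supp}\,F$ rules out segments meeting the unbounded strata, so each $I$ is bounded, and finiteness of the family follows. Note that $k_I$ is constructible for every interval $I$, not only for closed bounded ones, since endpoints may be open or closed.

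\textbf{Step 5 (uniqueness).} The category is Krull--Schmidt. Indeed, each $\mathrm{End}(k_I[j])\simeq k$ is local, because $I$ is connected and nonempty, and all Hom spaces are finite-dimensional. Therefore the multiset of pairs $(I_\alpha,\text{shift})$ is determined by $F$.

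Finally, refining the stratification does not change the objects $k_I$. So the decomposition is intrinsic, independent of the choice of the points $x_i$.
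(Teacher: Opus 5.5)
The paper does not prove this statement; it quotes it from Guillermou. The only comparison available is therefore with the cited result.

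Your argument is a correct, self-contained proof, and you read the statement correctly. ``Up to a shift'' has to be taken summand by summand, $F\simeq\bigoplus_\alpha k_{I_\alpha}[d_\alpha]$. A single global shift is false, e.g.\ for $k_{[0,1]}\oplus k_{[2,3]}[1]$. The summand-wise reading is also how the paper uses the result in the one-dimensional invertibility lemma.

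Your route has two steps:
\begin{itemize}
\item You split $F$ into its shifted cohomology sheaves, because $\mathrm{Ext}^{\geq 2}$ vanishes between constructible sheaves on a line.
\item You decompose a single constructible sheaf by Gabriel's theorem for the zigzag quiver of type $A_{2m+1}$. Uniqueness then comes from Krull--Schmidt--Azumaya; only the local rings $\mathrm{End}(k_I[j])\simeq k$ are needed there, not finiteness of all Hom spaces.
\end{itemize}
Quoting Guillermou is shorter. It also covers sheaves without compact support, with a locally finite family of intervals, where your finite quiver is not directly available. Your proof makes explicit that the only input from dimension one is heredity.

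The step you flag as the main obstacle can be bypassed. You never need the derived equivalence $D^b_{\{x_i\}}(k_E)\simeq D^b(\mathrm{rep}\,Q)$. You only need two things:
\begin{itemize}
\item the abelian equivalence, to apply Gabriel to each $H^j(F)$;
\item the vanishing $\mathrm{Ext}^{2}_{D^b(k_E)}(A,B)=0$ for sheaves $A,B$ constructible with respect to $\{x_1,\dots,x_m\}$.
\end{itemize}

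The vanishing follows from the local-to-global spectral sequence $H^p(E;\mathcal{E}xt^q(A,B))\Rightarrow\mathrm{Ext}^{p+q}(A,B)$:
\begin{itemize}
\item Away from the $x_i$, $A$ is locally isomorphic to $k^r$, so $\mathcal{E}xt^q(A,B)=0$ there for $q\geq 1$.
\item Near $x_i$, use the exact sequence $0\to A_{U}\to A\to A_{\{x_i\}}\to 0$ with $U=E\setminus\{x_i\}$. Locally, $A_U$ is a sum of sheaves $k_V$ with $V$ an open half-interval, and $R\mathcal{H}om(k_V,B)=Rj_{V*}(B|_V)$ is concentrated in degree $0$. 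Also $R\mathcal{H}om(k_{\{x_i\}},B)=R\Gamma_{\{x_i\}}B$ is concentrated in degrees $0,1$.
\item Hence $\mathcal{E}xt^q(A,B)=0$ for $q\geq 2$, and $\mathcal{E}xt^1(A,B)$ is supported on the finite set $\{x_1,\dots,x_m\}$.
\item Since $H^p(E;\cdot)=0$ for $p\geq 2$, every $E_2$ term of total degree $\geq 2$ vanishes.
\end{itemize}
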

	
	\begin{lemma}\label{lem:inversible_sheaf_dim_1}
		Let $F$ be a sheaf in $\Derb_{\mathbb{R}-c}(k_E)$ with compact support. Then $F$ is an inversible sheaf with respect to the star operation if and only if there exist $I_\alpha,I_\beta$, bounded closed and open intervals or open and closed respectively, such that
		$$
		F \simeq k_{I_\alpha} \star k_{I_\beta}[1]
		$$ 
	\end{lemma}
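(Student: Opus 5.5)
The plan is to use the decomposition of Theorem~\ref{th:structure_sheaf_dim1} together with an explicit table of convolutions of intervals. Statements are understood up to shift and translation, since \(\delta_x\star\cdot\) is translation by \(x\) and is invertible.

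\emph{Step 1 (table of convolutions).} Identify \(E\simeq\mathbb{R}\). For bounded intervals \(I,J\), the stalk of \(k_I\star k_J\) at \(t\) is \(\rsect_c(I\cap(t-J);k)\). This is a compactly supported cohomology of an interval, hence it is \(k\), \(k[-1]\) or \(0\), according as \(I\cap(t-J)\) is closed, open or half-open. From this I would record:
\begin{itemize}
\item \(k_{[a,b]}\star k_{[c,d]}\simeq k_{[a+c,b+d]}\) and \(k_{(a,b)}\star k_{(c,d)}\simeq k_{(a+c,b+d)}[-1]\);
\item for a closed interval of length \(\ell_1\) convolved with an open interval of length \(\ell_2\), one gets \(k_{\text{closed}}\) of length \(\ell_1-\ell_2\) if \(\ell_1>\ell_2\), a shifted open interval of length \(\ell_2-\ell_1\) if \(\ell_1<\ell_2\), and a shifted Dirac sheaf if \(\ell_1=\ell_2\);
\item a convolution with a half-open interval factor is either \(0\) or a shifted half-open interval, never a Dirac sheaf or a closed or open interval.
\end{itemize}
For instance, \(k_{[0,1]}\star k_{(0,1)}[1]\simeq\delta_1\).

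\emph{Step 2 ("if").} By Step~1, \(k_{[a,b]}\) is invertible with inverse \(k_{(-b,-a)}[1]\) up to translation, and \(k_{(c,d)}[1]\) is invertible with inverse \(k_{[-d,-c]}\). A \(\star\)-product of invertible objects is invertible, so \(k_{I_\alpha}\star k_{I_\beta}[1]\) is invertible.

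\emph{Step 3 ("only if").} Let \(G\) be an inverse of \(F\). I expect the main obstacle to be ensuring that \(G\) is again constructible with compact support, so that Theorem~\ref{th:structure_sheaf_dim1} applies to it. I would first try to justify this using the later description of the inverse as \(D_E(F^a)\); otherwise I would add it as part of the meaning of invertibility in \(\Derb_{\mathbb{R}-c}\) with compact supports.

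Write \(F\simeq\bigoplus_\alpha k_{I_\alpha}[m_\alpha]\) and \(G\simeq\bigoplus_\beta k_{J_\beta}[n_\beta]\). Since \(\star\) commutes with finite sums, \(\delta_0\simeq\bigoplus_{\alpha,\beta}k_{I_\alpha}\star k_{J_\beta}[m_\alpha+n_\beta]\). Each summand is an interval sheaf or zero by Step~1, and \(\delta_0\) is indecomposable with unique decomposition. Hence exactly one pair \((\alpha_1,\beta_1)\) gives \(\delta_0\), and all other products vanish.

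Set \(F=k_{I_{\alpha_1}}[m_{\alpha_1}]\oplus F'\). Then \(F'\star G=0\), because every \(k_{I_\alpha}\star k_{J_\beta}\) with \(\alpha\neq\alpha_1\) vanishes. By associativity,
\[
F'\simeq F'\star(G\star F)\simeq(F'\star G)\star F=0 .
\]
So \(F\) is a single shifted interval sheaf \(k_I\), and \(k_I\star k_{J_{\beta_1}}\) is a Dirac sheaf. By the last item of Step~1, \(I\) is therefore closed, open, or a point.

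Finally, by the closed\(\star\)open row of Step~1, every closed interval, open interval or point can be written, up to shift and translation, as \(k_{I_\alpha}\star k_{I_\beta}[1]\) with \(I_\alpha\) closed and \(I_\beta\) open. One chooses \(\ell_1>\ell_2\), \(\ell_1<\ell_2\) or \(\ell_1=\ell_2\) respectively. This concludes.
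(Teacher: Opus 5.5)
Your proposal is correct and follows essentially the paper's proof: decompose $F$ and its inverse $G$ by Theorem~\ref{th:structure_sheaf_dim1}, use uniqueness of the decomposition of $\delta_0\simeq\bigoplus_{\alpha,\beta}k_{I_\alpha}\star k_{J_\beta}[m_\alpha+n_\beta]$, and exclude half-open intervals; your associativity step $F'\simeq(F'\star G)\star F\simeq 0$ is a cleaner justification than the paper's bare appeal to ``unicity'' for discarding the other summands. Do not get the compact support of $G$ from $D_E(F^a)$, because the proof of Theorem~\ref{th:description_inverse_star} uses this very lemma, so that would be circular; either assume it, as the paper tacitly does, or avoid decomposing $G$ altogether: since $\delta_0$ is indecomposable and $\delta_0\simeq\bigoplus_\alpha (k_{I_\alpha}\star G)[m_\alpha]$, the same associativity trick forces $F\simeq k_I[m]$, and $I$ cannot be half-open because, e.g., $k_{[a,b)}\star k_{(0,1]}\simeq 0$.
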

	\begin{proof}
		\textit{Proof of $\impliedby$}: this is obvious, for $a,b\in\mathbb{R}$, the sheaf $k_{[a,b]}$ is inversible of inverse $k_{]-b,-a[}[1]$.
		
		\textit{Proof of $\implies$}: let $G\in\Derb_{\mathbb{R}-c}(k_E)$ the inverse of $F$. Since the dimension of $E$ is assumed to be $1$, we can decompose $G$ into the direct sum of its cohomologies. According to \ref{th:structure_sheaf_dim1}, there are finite family of intervals \(\{I_\alpha\}_{\alpha\in A}\), \(\{I_\beta\}_{\beta\in B}\) such that $F \simeq \bigoplus_{\alpha\in A} k_{I_\alpha}$, $G \simeq \bigoplus_{\beta\in B} k_{I_\beta}[d_\beta]$ respectively. 
		
		Let us oberve that constant sheaves on semi-open intervals are not inversible ($k_{[a,b[}\star k_{]a,b]}\simeq 0$). Now, $k_{\{0\}}\simeq F\star G \simeq \bigoplus_{\alpha\in A,\beta\in B}( k_{I_\alpha}\star k_{I_\beta})[d_\beta]$ and the decomposition is unique (up to reindexing). Since, for any $\alpha\in A$ such that $I_\alpha$ is semi-open, $I_\alpha\star I_\beta$ is still semi-open (eventually empty), all the $I_\alpha,I_\beta$ are either open or closed. Moreover, by unicity, there are exactly two indexes $\alpha\in A,\beta\in B$, such that $I_\alpha$ and $I_\beta$ are non empty. Hence, $F \simeq k_{I_\alpha}$ and $G \simeq k_{I_\beta}[d]$, and $d=1$. Obviously, if $I_\alpha$ is closed, then $I_\beta$ is open, and conversly. Also those intervals are bounded since constant sheaves on such intervals are not invertible.
	\end{proof}
	Now, $E$ is not necessarily of dimension $1$.
	
	\begin{theorem}\label{th:main_theorem}
		Let $S$ be a compact set of $E$. Then the constant sheaf $k_S$ is an inversible sheaf with respect to the star operation if and only if $S$ is a compact convex.
	\end{theorem}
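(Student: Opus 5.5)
The plan is to prove the two implications separately. For sufficiency we construct the inverse explicitly as $k_{I(S)}[d]$ and reduce the computation to a stalk computation. For necessity we project to lines with Lemma \ref{lemm:star_commutation_projection} and invoke the one-dimensional classification, Lemma \ref{lem:inversible_sheaf_dim_1}.

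\emph{Sufficiency.} Let $S$ be compact convex, with affine hull of dimension $d$. After a translation we may assume $0\in S$, so the affine hull is the vector subspace $H_S$. Put $G:=k_{I(S)}[d]$. We compute the stalk of $k_S\star G$ at a point $x$. By proper base change,
$$(k_S\star G)_x\simeq R\Gamma_c\bigl(S\cap (x-I(S));k\bigr)[d].$$
For $x\notin H_S$ this intersection is empty. For $x\in H_S$, the set $S\cap(x-I(S))$ is a convex subset of the $d$-dimensional space $H_S$. It is relatively closed in the open convex set $x-I(S)$ and is bounded. There are three cases.
\begin{itemize}
\item If $x=0$, then $x-I(S)=-I(S)$ and the intersection $S\cap(-I(S))$ is relatively open. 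It equals $I(S)\cap(-I(S))$, which is nonempty since $0\in S$ and $S$ is convex. It is therefore an open $d$-cell, and $R\Gamma_c$ of it is $k[-d]$.
\item If $x\neq 0$ and the intersection is nonempty, it is a convex set closed in an open cell, with a nonempty ``open boundary part''. It is then homeomorphic to a half-closed cell, so $R\Gamma_c$ vanishes. Concretely, it has the form $C\setminus\partial_{\mathrm{rel}}$, where a nonempty closed portion of the boundary is missing.
\item If the intersection is empty, the stalk is $0$.
\end{itemize}
Hence the stalk is $k$ at $0$ and $0$ elsewhere. To upgrade this to an isomorphism $k_S\star G\simeq \delta_0$, we note that a complex with compact support whose stalks vanish off $0$ is supported at $0$. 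The stalk computation then pins it down. Alternatively, one can use Proposition \ref{lemm:radon_transform}: by Lemma \ref{lemm:star_commutation_projection}, $\reim{u}(k_S\star G)$ reduces to the one-dimensional identity $k_{[a,b]}\star k_{]-b,-a[}[1]\simeq\delta_0$. This needs $\reim{u}k_{I(S)}$ to be computed correctly, which is messier, so I prefer the stalk argument.

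\emph{Necessity.} Suppose $k_S$ is invertible with inverse $G$, and let $u\in\mathcal L(E,L)$ be nonzero. Lemma \ref{lemm:star_commutation_projection} gives
$$\reim{u}k_S\star\reim{u}G\simeq\reim{u}\delta_0=\delta_0,$$
so $\reim{u}k_S$ is invertible on the line $L$. By Lemma \ref{lem:inversible_sheaf_dim_1}, up to shift $\reim{u}k_S\simeq k_J$ for a single bounded interval $J$. Its stalk at $t$ is $R\Gamma(S\cap u^{-1}(t);k)$, so every nonempty slice $S\cap u^{-1}(t)$ has the cohomology of a point (or of a sphere-like shift, with only one degree occurring). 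Since the shift is locally constant in $t$ on the connected set $u(S)$ and equals $0$ at the extreme slices, every slice is nonempty and $k$-acyclic with $H^0=k$. In particular every slice is connected. The remaining point is that a compact set all of whose parallel hyperplane sections, in every direction, are connected (indeed acyclic) is convex. I would argue by contradiction: if $y,z\in S$ but some point $w\in]y,z[$ is not in $S$, pick a hyperplane through $w$ close to a supporting configuration of a small disk around $w$ missing $S$. Then the line $[y,z]$ through $w$ meets $S$ in a disconnected set, and one uses induction on $n$ (the statement in lower dimension applied to the slices) to turn this into a disconnected hyperplane slice.

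\textbf{Main obstacle.} This last geometric step is where I expect the real difficulty. The induction requires knowing that the slices $S\cap u^{-1}(t)$ themselves satisfy the hypothesis in one dimension lower, namely that their projections are again acyclic. That follows by applying the argument with $u$ replaced by a map $E\to L^2$ and the corresponding criterion. If this step proves troublesome, the fallback is to use the hypothesis in the form ``$\reim{u}k_S$ is a single interval sheaf $k_J$ with the correct closedness for all $u$'', so that every line section of $S$ is a compact interval, and to prove convexity from that directly.
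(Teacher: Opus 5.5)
\textbf{Sufficiency.} Your candidate inverse has the wrong sign, and the stalk computation you give for it is false. With $G=k_{I(S)}[d]$ the stalk at $0$ is $R\Gamma_c(S\cap(-I(S)))[d]$. For $S=[0,1]$ the set $S\cap(-I(S))$ is empty, and in fact $k_{[0,1]}\star k_{]0,1[}[1]\simeq\delta_1$.

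Your principle for $x\neq0$ also fails. Take $S$ the triangle with vertices $0,e_1,e_2$ and $x=(2/3,2/3)$. Then $S\cap(x-I(S))$ is a closed hexagon with three pairwise disjoint closed edges removed, so its $R\Gamma_c$ is $k^2[-1]$.

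The inverse is $k_{-I(S)}[d]$, as in Lemma~\ref{lem:inversible_sheaf_dim_1}, where $k_{[a,b]}$ has inverse $k_{]-b,-a[}[1]$. The paper's $k_{I(S)}[d]$ has the same slip. With this inverse, $(k_S\star k_{-I(S)}[d])_x\simeq R\Gamma_c(S\cap(x+I(S)))[d]$.
\begin{itemize}
\item At $x=0$ this is $R\Gamma_c(I(S))[d]\simeq k$.
\item For $0\neq x\in H_S$, take a line $\ell$ parallel to $x$, parametrized so that translation by $x$ is $t\mapsto t+1$. Since $\ell+x=\ell$, if $\ell$ meets $I(S)$ and $S\cap\ell=[\alpha,\beta]$, then $\ell\cap S\cap(x+I(S))=[\alpha,\beta]\cap\,]\alpha+1,\beta+1[$. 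This is half-open or empty, and the intersection is empty if $\ell$ misses $I(S)$. Projecting along $\mathbb{R}x$ then gives $R\Gamma_c=0$.
\end{itemize}
This is the precise content your \emph{half-closed cell} remark needs.

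\textbf{Necessity.} What you actually establish is $Ru_!k_S\simeq k_{u(S)}$ for every nonzero $u\colon E\to L$. The shift is $0$ simply because a nonempty compact slice has $H^0\neq0$. So every hyperplane slice is empty or $k$-acyclic. This settles $n\le2$. For $n\ge3$, however, the step you leave open is the entire difficulty, and your sketch cannot work.
\begin{itemize}
\item Connected slices are not enough. The paraboloid cap $\{z=x^2+y^2\le1\}\subset\mathbb{R}^3$ is compact and not convex, yet every plane meets it in a connected set: its projection to the $(x,y)$-plane is a circle or a line intersected with the closed unit disk. Its horizontal slices are circles, so only acyclicity excludes it, and no \emph{disconnected hyperplane slice} can be produced in general.
\item The induction through $E\to L^2$ has no corresponding criterion, since the only classification available is in dimension one.
\item Your fallback (every line section is an interval) is essentially the paper's route. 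The paper projects along the line $xy$ onto a complementary hyperplane $H$ and looks for a stalk of rank $\ge2$. That needs every invertible sheaf on $H$ to be of the form $k_C[m]$. The statement in dimension $n-1$ concerns only constant sheaves $k_{S'}$, so it does not supply this.
\end{itemize}

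The missing idea is the injectivity statement you already cite, Proposition~\ref{lemm:radon_transform}. Let $C$ be the convex hull of $S$, of dimension $d$.
\begin{enumerate}
\item Applying your argument to the invertible sheaf $k_C$ gives $Ru_!k_C\simeq k_{u(C)}=k_{u(S)}\simeq Ru_!k_S$ for all $u\neq0$, since $u(S)$ is an interval.
\item Lemma~\ref{lemm:star_commutation_projection} then gives $Ru_!(k_S\star k_{-I(C)}[d])\simeq Ru_!(k_C\star k_{-I(C)}[d])\simeq\delta_0$.
\item The proposition yields $k_S\star k_{-I(C)}[d]\simeq\delta_0$.
\item Convolving with $k_C$, using the corrected sufficiency, gives $k_S\simeq k_C$, hence $S=C$ is convex.
\end{enumerate}
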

	\begin{proof}
	\textit{of $\implies$}: We prove that \(S\) is convex by induction on \(n:=\dim(E)\).
	For \(n=1\), this is Lemma~\ref{lem:inversible_sheaf_dim_1}. Assume \(n\ge 2\) and that the statement holds in dimension \(n-1\). 
	
	Let us assume that \(S\) is not convex. Then there exists two points $x,y$ such that the segment $]x,y[\cap S=\emptyset$. Let us denote $\mathcal{L}$ the affine line that supports the segment $[x,y]$, $L$ the associated vector space of $\mathcal{L}$, by $H$ a complementary hyperplan of $L$ in $E$, and by $p$ the linear projection of kernel $L$ and image $H$. Set \(z:=p(x)=p(y)\) (equivalently \(z=\mathcal L\cap H\)).
	
	By Lemma \ref{lemm:star_commutation_projection}, the sheaf $\reim{p}(k_S)$ is inversible, hence, by the induction hypothesis, there exists a bounded convext subset $C$ of $H$ such that $\reim{p}(k_S)\simeq k_C[m]$, for some integer $m\in\mathbb{Z}$. However, since $p^{-1}(\{z\})\cap S$ is a closed set and consists in at least two connected components, the stalk $\reim{p}(k_S)_z$ has rank greater than $2$, which leads to a contradiction. Hence $S$ is convex.  
	
	\textit{Proof of $\impliedby$}: let $d$ be the dimension of the affine subspace generated by the convex hull of $S$, then the sheaf $k_S$ is inversible of inverse $k_{I(S)}[d]$.
	
	\end{proof}
	We recall that for a sheaf $F$ be a sheaf in $\Derb(k_E)$, we denote by $F^a$ its antipodal sheaf, i.e. the image of $F$ by the anti-podal map $a$. 
	\begin{theorem}\label{th:description_inverse_star}
		Let $F$ be a sheaf in $\Derb_{\mathbb{R}-c}(k_E)$ with compact support and assume $F$ is an inversible sheaf with respect to the star operation. Then the inverse of $F$ is the dual of its anti-podal, $D_E(F^a)$. 
	\end{theorem}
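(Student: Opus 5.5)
Let \(G\) be the inverse of \(F\), so \(F\star G\simeq\delta_0=k_{\{0\}}\). The idea is to express the convolution inverse through an internal Hom/duality adjunction for \(\star\), so that the inverse is forced to be \(D_E(F^a)\).

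\textbf{Step 1: a duality formula for convolution.} For \(F,H\) with compact support I will compute
\[
\rhom(F\star H, k_{\{0\}}) \simeq \rsect(E;\rhom(\reim{s}(F\etens H),k_{\{0\}})).
\]
Using the adjunction \(\reim{s}\dashv \epb{s}\), this equals \(\RHom(F\etens H,\epb{s}k_{\{0\}})\). Here \(s^{-1}(0)=\Delta^a\), and since \(s\) is a smooth submersion of relative dimension \(n\), we have \(\epb{s}k_{\{0\}}\simeq k_{\Delta^a}[n]\) up to orientation, which is trivial on a vector space.

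Next I identify \(k_{\Delta^a}[n]\) with \(\oim{(\mathrm{id}\times a)}\oim{\delta}\omega_E\). This gives
\[
\RHom(F\etens H,\epb{s}k_0)\simeq \RHom(F\tens H^a,\omega_E)\simeq \RHom(F, D_E(H^a)).
\]
Taking \(H=G\) and using \(F\star G\simeq k_0\), the left side becomes \(\RHom(k_0,k_0)=k\).

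\textbf{Step 2: the inverse is characterized by a Yoneda-type argument.} Since \(G\) is an inverse, \(-\star G\) is an autoequivalence with quasi-inverse \(-\star F\). Applying Step 1 with an arbitrary \(H\) in place of \(G\) gives, functorially in \(H\),
\[
\Hom(H, D_E(F^a)) \simeq \Hom(F\star H, k_0)\simeq \Hom(H, G\star k_0)=\Hom(H,G).
\]
To obtain this, I apply Step 1 with the roles swapped so that \(D_E(F^a)\) appears on the right, using that \(D_E\) and \(a_*\) commute and that \(D_E\) is an involution on constructible sheaves. The middle isomorphism uses \(\Hom(F\star H,k_0)\simeq\Hom(H,G\star k_0)\), which holds because \(-\star G\) is an equivalence inverse to \(-\star F\).

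I restrict \(H\) to the compactly supported constructible sheaves. Then both \(G\) and \(D_E(F^a)\) lie in this subcategory: \(D_E\) preserves constructibility and supports, and \(G\) has compact support. The support of \(G\) is compact by Proposition~\ref{prop:microsupport_star}-type arguments, or more simply because \(\reim{s}\) of \(F\etens G\) being \(\delta_0\) forces it, which I will check. Yoneda then gives \(G\simeq D_E(F^a)\).

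\textbf{Main obstacle.} The delicate point is the middle isomorphism, which is a genuine closed-monoidal adjunction \(\Hom(F\star H,K)\simeq\Hom(H,\mathcal{H}om^\star(F,K))\). I must show that the internal Hom \(\mathcal{H}om^\star(F,k_0)\), computed as \(\roim{p_2}\rhom(p_1^{-1}F,\epb{s}k_0)\) with appropriate twists, is exactly \(D_E(F^a)\); Step 1 essentially does this. The remaining task is to check that, for the invertible \(F\), this right adjoint agrees with \(-\star G\). That holds formally: an inverse object in a monoidal category is both a left and a right dual, and duals are unique up to isomorphism.

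So the cleanest route is:
\begin{enumerate}
\item prove that \(D_E(F^a)\) is a right dual of \(F\) in the sense of an evaluation map \(F\star D_E(F^a)\to k_0\), coming from Step 1 via the identity of \(F\);
\item note that, as \(F\) is invertible, this evaluation map is an isomorphism;
\item invoke uniqueness of inverses.
\end{enumerate}
Verifying the triangle/adjunction identities, or equivalently that the evaluation map is an isomorphism, is the step I expect to require the most care. I would check it on stalks at \(0\) via \(\mu_{\{0\}}\), using Lemma~\ref{lemm:microloc_star}.
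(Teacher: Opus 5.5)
Your Steps 1 and 2 already form a complete and correct proof, and the route is genuinely different from the paper's.

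\textbf{The paper's route.} The paper reduces to dimension one. For each nonzero linear $u\colon E\to L$, $\reim{u}F$ is invertible by Lemma~\ref{lemm:star_commutation_projection}. By Guillermou's decomposition and Lemma~\ref{lem:inversible_sheaf_dim_1} it is a shifted $k_{[a,b]}$ or $k_{]a,b[}$, with inverse $D_L(\reim{u}F)^a$. Since duality commutes with proper direct images, $\reim{u}(F\star D_E(F^a))\simeq\delta_{0_L}$ for all $u$. The Radon-transform injectivity result (Proposition~\ref{lemm:radon_transform}) then forces $F\star D_E(F^a)\simeq\delta_0$.

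\textbf{Your route.} You use $\reim{s}\dashv\epb{s}$, $\epb{s}k_{\{0\}}\simeq k_{\Delta^a}[n]\simeq\oim{j}\omega_E$ with $j=(\mathrm{id},a)$, $\opb{j}\dashv\oim{j}$, and tensor--hom adjunction. This shows that $D_E(F^a)$ represents $H\mapsto\Hom(H\star F,k_{\{0\}})\simeq\Hom(F\star H,k_{\{0\}})$. Invertibility gives $\Hom(F\star H,k_{\{0\}})\simeq\Hom(F\star H,F\star G)\simeq\Hom(H,G)$, naturally in $H$, and Yoneda concludes. This is the formal fact that an inverse in a closed symmetric monoidal category is the internal dual of the unit.

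\textbf{What each buys.} Your argument needs no compact-support hypothesis, no one-dimensional classification, and no Radon or microsupport injectivity argument, which is the heaviest and most delicate ingredient of the paper's proof. You only need $D_E(F^a)\in\Derb(k_E)$, which constructibility provides. In exchange, the paper's route yields side information: every projection $\reim{u}F$ is a shifted constant sheaf on a bounded closed or open interval. That information is not needed for the statement.

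\textbf{What to trim.}
\begin{itemize}
\item Do not restrict $H$ to compactly supported constructible objects. $F\star(-)$ is an autoequivalence of all of $\Derb(k_E)$, by associativity and $G\star F\star H\simeq H$. Both $G$ and $D_E(F^a)$ live there, so Yoneda applies directly. Restricting would oblige you to prove a priori that $G$ has compact support, which you announce but never do.
\item In Step 2 you need neither biduality nor commutation of $D_E$ with $\oim{a}$. Just apply Step 1 to the pair $(H,F)$ and use $H\star F\simeq F\star H$.
\item The ``main obstacle'' is a non-issue: the middle isomorphism is just full faithfulness of an equivalence. The evaluation-map/triangle-identity route that follows is therefore redundant. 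Checking an evaluation map only on the stalk at $0$ via $\mu_{\{0\}}$ would not, on its own, show it is an isomorphism of sheaves.
\end{itemize}
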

	\begin{proof}
		Let $L$ be a one dimensional vector space and let us denote by $\mathcal{P}(E,L)$ the space of projectors of vector spaces from $E$ to $L$.
		
		Let $u$ be an element of $\mathcal{P}(E,L)$. Applying lemma \ref{lemm:star_commutation_projection}, $\reim{u}(F)$ is an inversible sheaf of a one dimensional vector space, and hence an open or closed interval. According to Lemma \ref{lem:inversible_sheaf_dim_1}, is isomorphic, up to a shift, either to $k_{[a,b]}$ or $k_{]a,b[}$, whose inverse are $k_{]-b,-a[}[1]$, $k_{[-b,-a]}[1]$ respectively, that is to say $D_L(\reim{u}(F))^a$, (the dual in $\Derb_{\mathbb{R}-c}(k_L)$).
		
		Now, $F$ has compact support, observing that its dual commutes with direct image with compact support functor, we have for any $u\in\mathcal{P}(E,L)$,
		 
		$$
		D_L(\reim{u}(F^a)) \simeq \reim{u}(D_E(F^a))
		$$
		
		Let us apply Lemma \ref{lemm:star_commutation_projection}, we have for any $u\in\mathcal{P}(E,L)$,
		 \begin{align*}
			\reim{u}(F \star D_E(F^a)) & \simeq \reim{u}(F) \star \reim{u}(D_E(F^a))\\
			& \simeq \reim{u}(F) \star D_L(\reim{u}(F^a)) \\
			& \simeq \reim{u}(F) \star D_L(\reim{u}(F))^a \\
			& \simeq\delta_{0_L}
		\end{align*}
		Applying the injectivity result of \ref{lemm:radon_transform}, we get that $F \star D_E(F^a)\simeq\delta_{0_L}$.
	\end{proof}
	\section{Characteristic Cycles and Convolution}	
	
	\subsection{Notations}
	
	For a manifold $M$, we will denote by $\pi_M$ the morphism $T^*M\to M$. In all the following, we indentify $T^*E$ with $E^*\times E$. We denote by $p$ be the natural projection $p: E\times E^* \to E^*$, by $q_1$, $q_2$ the first and second projection of $(E\times E)\times_{s} T^*E$, on $E\times E^*$, and $a$ the projection of any space on one of its point.
	
	Following notations of section \ref{sec:specialization}, we denote by $s_{d}$ and $s_{\pi}$ the maps:
	\eqn
	T^{*}(E\times E) \xleftarrow{s_{d}} (E\times E)\times_s T^{*}E \xrightarrow{s_{\pi}} T^{*}E 
	\eneqn
	
	and by $s_{\Delta,d}$ and $s_{\Delta,\pi}$ the maps:
	\eqn
	T_{ \Delta_{E}^{a}}^{*}(E\times E) \xleftarrow{s_{\Delta,d}} \Delta_{E}^{a}\times_{0} T_{0}^{*}E \xrightarrow{s_{\Delta,\pi}} T_{0}^{*}E 
	\eneqn
	
	For a sheaf $F\in D^b(k_E)$, let us denote by $SS_g(F)$ the image of the set $SS(F)$ by $p$. Concretely, $SS_g(F)$ is the set $\{\xi\in E^*;\exists x\in E | (x,\xi)\in SS(F)\}$.
	
		
	\subsection{Projection of characteristic cycles on $E^*$}	
	
	We shall make use of the notations of section \ref{sec:lagrangian_cycles}. Let us consider the following diagram:
	
	\eq\label{dia:b_function_convolution}
	\begin{tikzcd}
		&
		&
		(E\times E)\times_{s} T^*E 
		\arrow[dll, "s_d"']
		\arrow[drr, "s_\pi"]
		\arrow[dl, "q_1"]
		\arrow[dr, "q_2"']
		&
		& {}
		\\
		T^*(E\times E)
		\arrow[dd, "\pi_{E\times E}"']
		&
		E \times E^*
		\arrow[dr, "p"] 
		&
		& 
		E \times E^*
		\arrow[dl, "p"']
		& T^*E
		\arrow[dd, "\pi_E"]
		\arrow[dll, "p"', bend right=-15]
		\\
		& {}
		&
		E^* 
		&
		&
		&
		\\
		E\times E
		\arrow[rrrr, "s"']
		& {}
		& 
		&
		&
		E
	\end{tikzcd}
	\eneq
	
	Let \( F \in \Derb_{\mathbb{R}\text{-}c}(k_E)\) with compact support, there is a chain of morphisms:

	\eq\label{def:the_b_function}
	\begin{aligned}
		\rsect(E;\rhom(F,F)) & \to \rsect(T^*E;\rsect_{\on{SS}(F)}(\pi^{-1}\omega_E)) \\
		& \simeq \rsect(E^*;\reim{p}\rsect_{\on{SS}(F)}(\pi^{-1}\omega_E)) \\
		& \simeq \rsect(E^*;\reim{p}\rsect_{\on{SS}(F)}(\epb{p}k_{E^*})) \\
		&\overset{p}{\simeq} \rsect(E^*;\rhom(\reim{p}k_{\on{SS}(F)},k_{E^*})) \\
	\end{aligned}
	\eneq
	Let us denote by $\mathcal{L}_E^b$ the sheaf 	
	\[
		\mathcal{L}_E^b
		\;:=\;
		\varinjlim_{\Lambda}
		H^0(\rhom(\reim{p}k_{\Lambda},k_{E^*}))
	\]
	where the limit runs over all closed conic subanlytic isotropic subsets of $T^*E$. 
	\begin{definition}
		Let  \( F \in \Derb_{\mathbb{R}\text{-}c}(k_E)\) with compact support, we define by $B(F)$ the image of $id\in \Hom(F,F)$ in $\Hom(\reim{p}k_{\on{SS}(F)},k_E^*)\subset H^0(E^*;\mathcal{L}_E^b)$ by the chain of morphisms (\ref{def:the_b_function}).
	\end{definition}
	Note that $B(F)$ is the image of $CC(F)$ by the morphism $p$ of \ref{def:the_b_function}. Indeed, the image of id by the first morphism of this chain is nothing but $\mathrm{CC}(F)$.

	We continue to denote by \(a\) the antipodal map on \(E^*\). There is a morphism
	\[
	a_*: a^{-1}\mathcal{L}_E^b \longrightarrow \mathcal{L}_E^b,
	\]
	and for \(F \in \Derb_{\mathbb{R}\text{-}c}(k_E)\), we write \(B(F)^a\) for the image of \(B(F)\) under the induced morphism on $
	H^0(E^*;\mathcal{L}_E^b)$.
	
	\begin{lemma}\label{lem:refined_product_support}
		Let \( F,G \in \Derb_{\mathbb{R}\text{-}c}(k_E)\) with compact support. There is a natural morphism
		$$
		\rhom(\reim{p}k_{\on{SS}(F)},k_{E^*})\tens\rhom(\reim{p}k_{\on{SS}(G)},k_{E^*})
		\to
		\rhom(\reim{p}k_{s_\pi s_d^{-1}(SS(F)\times SS(G))},k_{E^*}).
		$$
	\end{lemma}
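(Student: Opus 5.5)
Write $A:=SS(F)$, $B:=SS(G)$ and $C:=s_\pi s_d^{-1}(A\times B)$. I would build the morphism from three natural maps: an exterior product, a Künneth isomorphism, and a restriction along a morphism of sheaves. The key geometric fact is the explicit description of $C$. In the identification $T^*E\simeq E\times E^*$, a point of $s_d^{-1}(A\times B)$ is a pair $(x,y)\in E\times E$ together with $\xi\in E^*$ such that $(x,\xi)\in A$ and $(y,\xi)\in B$. Hence
\[
C=\{(x+y,\xi)\,;\,(x,\xi)\in A,\ (y,\xi)\in B\},
\]
which is the image of $A\times_{E^*}B$ under the map $\sigma:((x,\xi),(y,\xi))\mapsto (x+y,\xi)$. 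This map commutes with the projections to $E^*$. Since $F$ and $G$ have compact support, $A$ and $B$ have $\pi$-proper projections, so $\sigma$ is proper on $A\times_{E^*}B$ and $C$ is closed.

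\textbf{Step 1 (exterior product and Künneth).} Let $\delta:E^*\to E^*\times E^*$ be the diagonal. First I use the standard morphism
\[
\rhom(K_1,k_{E^*})\tens\rhom(K_2,k_{E^*})\to\rhom(K_1\tens K_2,k_{E^*}),
\]
which I would write as the restriction to the diagonal $\opb{\delta}$ of the exterior morphism on $E^*\times E^*$. Then I apply base change and the Künneth formula:
\[
\reim{p}k_A\tens\reim{p}k_B\simeq\reim{p_{12}}\,k_{A\times_{E^*}B},
\]
where $p_{12}:A\times_{E^*}B\to E^*$ is the projection. The base change is legitimate because $\reim{p}$ commutes with $\opb{\delta}$ after forming the exterior product.

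\textbf{Step 2 (restriction morphism).} The projection $p_{12}$ factors as $p\circ\sigma$. Since $\sigma$ is proper onto its image $C$, there is an adjunction morphism $k_C\to\roim{\sigma}\opb{\sigma}k_C=\reim{\sigma}k_{A\times_{E^*}B}$. Applying $\reim{p}$ gives
\[
\reim{p}k_C\to\reim{p}\reim{\sigma}k_{A\times_{E^*}B}\simeq\reim{p}k_A\tens\reim{p}k_B.
\]
Applying $\rhom(\cdot,k_{E^*})$, which is contravariant, and composing with Step 1 yields the morphism in the statement. I will also check that the construction is compatible with enlarging $A$ and $B$, so that it passes to the inductive limit defining $\mathcal L^b_E$.

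\textbf{Main obstacle.} The delicate points are the properness of $\sigma$ and the identification of the Künneth and base-change isomorphisms without any boundedness assumption on the fibres. Properness holds because $\pi(A)$ and $\pi(B)$ are compact, but $\xi$ ranges over all of $E^*$, so I will argue fibrewise over $E^*$. If a natural morphism in the right direction proves awkward, I would instead use $\reim{\sigma}\epb{\sigma}\to\mathrm{id}$ together with the isomorphism $\epb{p}k_{E^*}\simeq\omega$ that already appears in (\ref{def:the_b_function}).
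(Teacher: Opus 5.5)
Your proposal is correct and follows essentially the paper's proof: identify $\reim{p}k_{SS(F)}\tens\reim{p}k_{SS(G)}$ with $\reim{p}\reim{s_\pi}k_{s_d^{-1}(SS(F)\times SS(G))}$ by Künneth and base change, map $\reim{p}k_C$ into it by an adjunction morphism, then apply $\rhom(\cdot,k_{E^*})$. Your version is slightly cleaner on two points. You take the adjunction directly along $\sigma$ restricted to the closed set $A\times_{E^*}B$, where $\sigma$ is proper so $\roim{\sigma}\simeq\reim{\sigma}$; the paper instead goes through $\opb{s_\pi}k_C\to k_{s_d^{-1}(SS(F)\times SS(G))}$ and writes $\reim{s_\pi}$ where only $\roim{s_\pi}$ is legitimate before restricting. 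You also make explicit the morphism $\rhom(K_1,k_{E^*})\tens\rhom(K_2,k_{E^*})\to\rhom(K_1\tens K_2,k_{E^*})$, which the paper leaves implicit.
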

	
	\begin{proof}
		
		Since the set $SS(F)\times SS(G)$ is closed, one has a natural morphism
		\eqn
		\opb{s_\pi}k_{s_\pi s_d^{-1}(SS(F)\times SS(G))} \to k_{s_d^{-1}(SS(F)\times SS(G))}
		\eneqn
		
		On the other hand, denoting by $\tilde{p}$ is the natural morphism $p\times_{E^*} p$, and after identification $(E\times E)\times_{s} T^*E$ with $(E\times E)\times E^*$, the K\"unneth isomorphism gives
		\eqn
		\begin{aligned}
		\reim{p}k_{\on{SS}(F)}\tens \reim{p}k_{\on{SS}(G)} & \isoto \reim{\tilde{p}}(k_{\on{SS}(F)}\etens_{E^*}k_{\on{SS}(G)})\\
		& \isoto \reim{\tilde{p}}(k_{s_d^{-1}(SS(F)\times SS(G))})\\
		& \isoto \reim{p}\reim{s_\pi}(k_{s_d^{-1}(SS(F)\times SS(G))})\\
		\end{aligned}
		\eneqn
		
		Hence, the support of $F,$ being compact, we have the chain of morphisms:
		\eqn
		\begin{aligned}
			\reim{p}k_{s_\pi s_d^{-1}(SS(F)\times SS(G))} & \to \reim{p}\reim{s_\pi}\opb{s_\pi}k_{s\pi s_d^{-1}(SS(F)\times SS(G))} \\
			& \to \reim{p}\reim{s_\pi}k_{s_d^{-1}(SS(F)\times SS(G))}\\
			& \isofrom \reim{p}k_{\on{SS}(F)}\tens \reim{p}k_{\on{SS}(G)} 
		\end{aligned}
		\eneqn
		
		
	\end{proof}
	
	Hence, given \( F,G \in \Derb_{\mathbb{R}\text{-}c}(k_E)\) with compact support, we define the product $B(F)\bullet B(G)$ to be the image of $\{B(F)\}\times \{B(G)\}$ by the morphism $p$:
	
	\begin{align}
		H^0\!\bigl(E^*;\rhom(\reim{p}k_{\on{SS}(F)},k_E^*)\bigr)
		\times
		H^0\!\bigl(E^*;\rhom(\reim{p}k_{\on{SS}(G)},k_E^*)\bigr)
		\nonumber\\
		\to\;
		H^0\!\bigl(E^*;\rhom(\reim{p}k_{s_\pi s_d^{-1}(\on{SS}(F)\times\on{SS}(G))},k_E^*)\bigr)
	\end{align}
	
	Let us denote by $\bold{1}$, the constant section of $\rsect(E^*;k_{E^*})$. We have:
	\begin{proposition}\label{prop:property_B_function}
	Let \( F \in \Derb_{\mathbb{R}\text{-}c}(k_E)\) with compact support. Then:
	\begin{enumerate}
		\renewcommand{\labelenumi}{(\roman{enumi})}
		\item \( B(D_E(F)) = B(F)^a \)
		\item \( B(k_{\{0\}_E}) = \bold{1} \)
	\end{enumerate}
	\end{proposition}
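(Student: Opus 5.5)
Both parts reduce to known properties of characteristic cycles. The first step is to note that $B$ factors through $\mathrm{CC}$: by construction, $B(F)$ is the image of $\mathrm{CC}(F)\in H^0_{\on{SS}(F)}(T^*E;\pi^{-1}\omega_E)$ under the last three isomorphisms of (\ref{def:the_b_function}). Call this composite $p_\flat$; it is induced by $\reim{p}$ together with the identification $\epb{p}k_{E^*}\simeq\pi^{-1}\omega_E$. Each claim then becomes the statement that $p_\flat$ intertwines an operation on Lagrangian cycles with the corresponding operation on $\mathcal{L}^b_E$.

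For (i), apply (\ref{prop:antipodal_lagrangian}) to get $\mathrm{CC}(D_EF)=\mathrm{CC}(F)^a$. Here $a$ acts on $T^*E\simeq E\times E^*$ as $(x,\xi)\mapsto(x,-\xi)$, the antipodal map of the cotangent fibres. The projection $p$ intertwines this map with the antipodal map $a$ of $E^*$, that is, $p\circ a=a\circ p$. The plan is to verify that $p_\flat$ commutes with $a_*$. This amounts to two compatibilities: base change of $\reim{p}$ along the isomorphism $a$, and compatibility of $a$ with the isomorphism $\epb{p}k_{E^*}\simeq\pi^{-1}\omega_E$. Both are natural isomorphisms of functors. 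Also $\on{SS}(D_EF)=\on{SS}(F)^a$, so the supports match. The one point needing care is whether $a_*$ on $\mathcal{L}_E$ carries an orientation sign, namely the factor $(-1)^n$ from the fibre orientation. This sign should appear identically on both sides, because the orientation used on $T^*E$ relative to $E^*$ is the one coming from $E$. I expect this sign bookkeeping to be the main, and only real, obstacle.

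For (ii), take $F=k_{\{0\}}$. Then $\on{SS}(F)=T^*_0E=\{0\}\times E^*$, and by the example of \cite[Ex.~9.3.4]{KS90} the characteristic cycle is the conormal cycle $[T^*_{\{0\}}E]=i_*[\pt]$ with $i:\{0\}\hookrightarrow E$. This identification is consistent with (\ref{eq:image_cycle}) applied to $\roim{i}k_{\pt}$. Since $p$ restricted to $T^*_0E$ is an isomorphism onto $E^*$, we have $\reim{p}k_{\on{SS}(F)}\simeq k_{E^*}$, and hence $\rhom(\reim{p}k_{\on{SS}(F)},k_{E^*})\simeq k_{E^*}$. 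It remains to check that the section $[T^*_0E]$ is sent to the constant section $\bold{1}$. The cohomology $H^0_{T^*_0E}(T^*E;\pi^{-1}\omega_E)$ is computed locally along the fibre over $0$ as $H^n_{\{0\}}(E;\omega_E)$-type data pulled back, and the cycle $i_*[\pt]$ corresponds to the fundamental class, which under $\epb{p}k_{E^*}\simeq\pi^{-1}\omega_E$ restricted to $\{0\}\times E^*$ is the unit. Unwinding the definition of $i_*$ in (\ref{morph:image_cycle_push}) with $N=\pt$ makes this explicit: the pushforward of $[\pt]$ is the Thom class of $T^*_0E$ in $T^*E$, and $\reim{p}$ integrates this Thom class along the fibre $E$ to give $1$.
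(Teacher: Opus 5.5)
Your proposal is correct and follows the paper's own route. For (i) you factor $B$ through $\mathrm{CC}$, use $\mathrm{CC}(D_EF)=\mathrm{CC}(F)^a$, and use that $p$ commutes with the antipodal maps; for (ii) you identify $\mathrm{CC}(k_{\{0\}})=[T^*_0E]=i_*[\mathrm{pt}]$ and push it through $p$, and the paper likewise only asserts that the result is $\mathbf{1}$. Your sign worry resolves as you predict, but the real reason is that $a_{T^*E}=\mathrm{id}_E\times a_{E^*}$ is the base change of $a_{E^*}$ along $p$, so every arrow of your $p_\flat$ commutes with $a_*$ by naturality, and any sign $a_*$ carries (e.g.\ $(-1)^n$ on the zero-section cycle, coming from local cohomology in the $E^*$-direction, not from the orientation of $E$) is transported unchanged; also, the group in (ii) should be $H^0_{\{0\}}(E;\omega_E)\simeq H^n_{\{0\}}(E;\mathrm{or}_E)$, not $H^n_{\{0\}}(E;\omega_E)$.
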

	\begin{proof}
		(i) Since the antipodal map obviously commutes with the morphism $p$ of \ref{def:the_b_function}, \(B(F)^a\) is the image of \(CC(F)^a\), the first equality follows from the property \ref{prop:antipodal_lagrangian}, $CC(DF)=CC(F)^a$.
		
		(ii Following \cite{KS90}, we have $CC(k_{\{0\}_E})$ is the oriented chain $[T^*_{0_E}E]$. As recalled in \ref{sec:lagrangian_cycles},  $[T^*_{0_E}E]=i_{*} a_X^{*}[pt]$. To compute $B(k_{\{0\}_E})=p([T^*_{0_E}E])$, we use the following chain of morphism:
		\[
		\begin{tikzcd}
			H_{{0_E}}^0({0_E};\opb{\pi}_{{0_E}}\omega_{{0_E}})
			\arrow[r]
			\arrow[ddr,dashed]
			&
			H_{T^*E}^0(T^*E;\opb{\pi}_E\omega_{E})
			\arrow[d]
			\\
			&
			H_{T_{0_E}^*E}^0(T^*E;\opb{\pi}_E\omega_{E})
			\arrow[d,"p"]
			\\
			&
			H^0(E^*;k_{E^*})
		\end{tikzcd}
		\]
		and the image of $1\in H_{{0_E}}^0({0_E};\opb{\pi}_{{0_E}}\omega_{{0_E}})$ is $\bold{1}$.
	
	\end{proof}
	
	Let \(X\) and \(Y\) be two manifolds. For a morphism $f:Y\to X$ and a projection $a_X:X\to\{pt\}$, we will denote by $a_{f,X}$, the composition $a_X\circ f$. Let us consider the following situation where $i$ is the diagonal embedding $i:Y \hookrightarrow (Y\times Y$, $\tilde i$ is the diagonal embedding $i:(X\times X)\times Y \hookrightarrow (X\times X\times X)\times (Y\times Y) $, and $\pi;\tilde{\pi},q,q_1,q_2$ the natural projection as follows:
	
	\[
	\begin{tikzcd}
		(X\times X)\times(Y\times Y)
		\arrow[d,"q"']
		&
		(X\times X)\times Y
		\arrow[l,hook',"\tilde i"']
		\arrow[rr,"\tilde{\pi}"]
		&&
		X\times Y
		\arrow[dl,"q_1"']
		\arrow[dr,"q_2"]
		\\
		X\times X
		\arrow[rr,"\pi"']
		\arrow[d,"a_{X\times X}"']
		&&
		X
		\arrow[dr,"a_X"']
		&&
		Y
				\arrow[dl,"a_Y"]
		\\
		\{pt\}\arrow[rrr,no head]
		&&&
		\{pt\}
		&
	\end{tikzcd}
	\]

	Let us simply denote by $a$ the morphism $a_{(X\times X)\times(Y\times Y)}$. We have the lemma:
	\begin{lemma}\label{lem:commutative_diagram}
		Let \( F,G \in \Derb_{\mathbb{R}\text{-}c}(k_{X\times Y})\) with compact support. There are natural morphisms such that the following diagram commutes:
		\[
		\begin{tikzcd}[column sep=huge,row sep=large]
			\roim{a_X}\roim{q_1}F \otimes_k \roim{a_X}\roim{q_1}G
			\arrow[r,"\sim"]
			\arrow[d,"\sim"']
			&
			\roim{a}(F\boxtimes G)
			\arrow[d]
			\\
			\roim{a_Y}\reim{q_2}F \otimes_k \roim{a_Y}\reim{q_2}G
			\arrow[r]
			&
			\roim{a_Y}\bigl(\reim{q_2}F\otimes_{k_Y}\reim{q_2}G\bigr).
		\end{tikzcd}
		\]
		\end{lemma}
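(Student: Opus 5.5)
We construct the two morphisms and then check commutativity by reducing to the Künneth isomorphism and the adjunction units. The top horizontal arrow will be the Künneth isomorphism for $a=a_{(X\times X)\times(Y\times Y)}$. Write $a$ as the composition of the projection $(X\times Y)\times(X\times Y)\to\{pt\}\times\{pt\}$; up to the obvious reordering of factors, the source is $(X\times X)\times(Y\times Y)$. Since $F,G$ have compact support, $\roim{a_X}\roim{q_1}F\simeq \roim{a_{X\times Y}}F\simeq \reim{a_{X\times Y}}F$, and similarly for $G$. The Künneth formula then gives $\reim{a_{X\times Y}}F\otimes_k\reim{a_{X\times Y}}G\isoto\reim{a}(F\boxtimes G)\simeq\roim{a}(F\boxtimes G)$, because $F\boxtimes G$ also has compact support. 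The left vertical isomorphism is just $\roim{a_X}\roim{q_1}\simeq\roim{a_{X\times Y}}\simeq\roim{a_Y}\roim{q_2}\simeq\roim{a_Y}\reim{q_2}$ on compactly supported objects, applied to both factors.

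The right vertical arrow is obtained by factoring $a$ through $Y$ via the diagonal. We write $a=a_Y\circ q_2\circ\tilde\pi\circ\ldots$. More precisely, we use the adjunction morphism $F\boxtimes G\to\roim{\tilde i}\opb{\tilde i}(F\boxtimes G)$ and observe that $\opb{\tilde i}(F\boxtimes G)$ is the relative exterior product over $Y$, namely $F\boxtimes_Y G$ on $(X\times X)\times Y$. This gives
\[
\roim{a}(F\boxtimes G)\to\roim{a}\roim{\tilde i}(F\boxtimes_Y G)\simeq \roim{a_Y}\roim{(q\text{-}proj)}(F\boxtimes_YG).
\]
The last object is identified with $\roim{a_Y}(\reim{q_2}F\otimes_{k_Y}\reim{q_2}G)$ by the relative Künneth formula over $Y$. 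This uses properness of the projection $(X\times X)\times Y\to Y$ on the support, which holds by compactness, so that $R\mathrm{p}_*=R\mathrm{p}_!$. The bottom horizontal arrow is the canonical cup-product morphism $\roim{a_Y}A\otimes_k\roim{a_Y}B\to\roim{a_Y}(A\otimes B)$, which is adjoint to $\opb{a_Y}(\roim{a_Y}A\otimes\roim{a_Y}B)\to A\otimes B$ built from the counits.

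To prove commutativity, we reduce everything to statements about adjunction units and counits. The cup-product morphism on $Y$ is itself obtained as the external Künneth map on $Y\times Y$ followed by restriction to the diagonal $i:Y\hookrightarrow Y\times Y$. In other words, it factors as
\[
\roim{a_Y}A\otimes\roim{a_Y}B\simeq\roim{a_{Y\times Y}}(A\boxtimes B)\to\roim{a_{Y\times Y}}\roim{i}\opb{i}(A\boxtimes B)=\roim{a_Y}(A\otimes B).
\]
We apply this with $A=\reim{q_2}F$ and $B=\reim{q_2}G$. The external Künneth isomorphism is compatible with composing direct images, because $\reim{q_2}F\boxtimes\reim{q_2}G\simeq \reim{(q_2\times q_2)}(F\boxtimes G)$. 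Using this, the lower path becomes: Künneth on $(X\times Y)^2$, then the diagonal restriction along $\tilde i$, then base change along the cartesian square formed by $\tilde i$, $i$ and the projections. The square is cartesian because $\tilde i$ is the pullback of $i$ under $(X\times X)\times(Y\times Y)\to Y\times Y$. The upper path is Künneth followed by the same unit for $\tilde i$. The two paths therefore agree by naturality of the adjunction unit with respect to base change, i.e. the compatibility $\roim{g}\roim{\tilde i}\opb{\tilde i}\simeq \roim{i}\opb{i}\roim{g}$ for proper $g$.

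The main obstacle is this last compatibility: checking that the base change isomorphism intertwines the two units. We would handle it by working with $\reim{}$ throughout, which is justified by compact supports, and invoking the proper base change theorem together with its standard compatibility with units, as in \cite{KS90}. The remaining verifications are routine identifications of Künneth maps.
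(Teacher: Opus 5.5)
Your proposal is correct and follows essentially the same route as the paper. Both arguments factor the bottom cup-product map as the external Künneth isomorphism on $Y\times Y$ followed by the unit $\mathrm{id}\to\roim{i}\opb{i}$ of the diagonal, and compare it with the unit for $\tilde i$ applied after the Künneth isomorphism on $(X\times Y)^2$. Your identification of the comparison arrow as proper base change along the cartesian square formed by $\tilde i$ and $i$ (with properness coming from the compact supports) makes explicit the step the paper leaves as ``functoriality of the adjunction morphism''.
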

	\begin{proof}
		The morphisms are canonical and given by Künneth formula and functoriality of the adjonction morphism $id\to\roim{i}\opb{i}$. Besides, we make use of the relations: $a_X \circ q_1=a_Y \circ q_2$ and $a \circ \tilde i=a_Y \circ q_2 \circ \tilde\pi$ and the fact that, for $F,G$ with compact support, $\roim{q_2}\simeq\reim{q_2}$.  The following diagram commutes:
		
		\[
		\begin{tikzcd}
			\begin{aligned}
				\roim{a_X}\roim{q_1}F \otimes_k \roim{a_X}\roim{q_1}G
			\end{aligned}
			\arrow[rrrr]
			\arrow[d,"\sim"']
			&&&&
			\begin{aligned}
				\roim{a}(F\boxtimes G)
			\end{aligned}
			\arrow[ddd]
			\\
			\begin{aligned}
				\roim{a_Y}\reim{q_2}F\otimes_k
				\roim{a_Y}\reim{q_2}G
			\end{aligned}
			\arrow[d,"\sim"']
			&&&&
			\\
			\begin{aligned}
				\roim{a_Y}(\reim{q_2}F \etens \reim{q_2}G)
			\end{aligned}
			\arrow[d]
			&&&&
			\\
			\begin{aligned}
				\roim{a_{Y\times Y}}\roim{i}\, i^{-1}(\reim{q_2}F \etens \reim{q_2}G)
			\end{aligned}
			\arrow[rrrr]
			\arrow[d,"\sim"']
			&&&&
			\begin{aligned}
				\roim{a}\,
				\roim{\tilde i}\,\tilde i^{-1}(F\boxtimes G)
			\end{aligned}
			\arrow[d,"\sim"]
			\\
			\begin{aligned}
				\roim{a_{Y\times Y}}\roim{i}(\reim{q_2}F \tens_{k_Y} \reim{q_2}G)
			\end{aligned}
			\arrow[rrrr]
			\arrow[ddrrrr,bend right=15]
			&&&&
			\begin{aligned}
				\roim{a}\,
				\roim{\tilde i}\bigl(F\etens_Y G\bigr)
			\end{aligned}
			\arrow[d,"\sim"]
			\\
			&&&&
			\begin{aligned}
				\roim{a_Y}\,\roim{q_2}\,\roim{\tilde\pi}
				\bigl(F\etens_Y G\bigr)
			\end{aligned}
			\arrow[d,"\sim"]
			\\
			&&&&
			\begin{aligned}
				\roim{a_Y}\bigl(\reim{q_2}F \otimes_{k_Y} \reim{q_2}G\bigr)
			\end{aligned}
		\end{tikzcd}
		\]
	\end{proof}

	\begin{theorem}\label{th:main_theorem_B_function}
		Let \( F,G \in \Derb_{\mathbb{R}\text{-}c}(k_E)\) with compact support. Then we have:
		\eqn
		B(F\star G) = B(F) \bullet B(G)
		\eneqn
	\end{theorem}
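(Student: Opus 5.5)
Since $F\star G=\reim{s}(F\etens G)$, I will compute $\mathrm{CC}(F\star G)$ from the functorial properties of characteristic cycles recalled in section \ref{sec:lagrangian_cycles}, and then project to $E^*$ through the chain (\ref{def:the_b_function}).

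First, by the exterior product formula, $\mathrm{CC}(F\etens G)=\mathrm{CC}(F)\etens\mathrm{CC}(G)$, a cycle supported on $SS(F)\times SS(G)\subset T^*(E\times E)$. Second, $F\etens G$ has compact support, so $s$ is proper on its support and $\reim{s}=\roim{s}$. The direct image formula (\ref{eq:image_cycle}) then gives
\[
\mathrm{CC}(F\star G)=\oim{s}\bigl(\mathrm{CC}(F)\etens\mathrm{CC}(G)\bigr),
\]
supported on $s_\pi s_d^{-1}(SS(F)\times SS(G))$. The whole task is therefore to show that projecting this push-forward by $p$ equals the product $\bullet$ of the projections of the two factors. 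By definition, $\bullet$ comes from the morphism of Lemma \ref{lem:refined_product_support}.

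The key geometric observation concerns $s_d$. Since $s$ is linear, $s_d(x,y;\xi)=((x,\xi),(y,\xi))$. So $(E\times E)\times_s T^*E\simeq E\times E\times E^*$ sits in $T^*(E\times E)\simeq (E\times E^*)\times(E\times E^*)$ as the fiber product over $E^*$ of two copies of $E\times E^*$, and $p\circ s_\pi = p\circ q_1 = p\circ q_2$ (diagram (\ref{dia:b_function_convolution})). Hence $p\circ s_\pi\circ s_d^{-1}$ is just "restrict the pair of covectors to the diagonal of $E^*\times E^*$, then integrate out $E\times E$."

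I would then apply Lemma \ref{lem:commutative_diagram} with $X=E\times E$ (or $E$ twice) and $Y=E^*$, taking for $F$ and $G$ the sheaves $\rsect_{SS(F)}\pi^{-1}\omega_E$ and $\rsect_{SS(G)}\pi^{-1}\omega_E$. After using $\epb{p}k_{E^*}\simeq\pi^{-1}\omega_E$ and duality, these become $k_{SS(F)}$ and $k_{SS(G)}$.

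The top row of that lemma corresponds to taking the exterior product of cycles on $T^*E\times T^*E$. The left column is the map $p$ applied to each factor, producing $B(F)\otimes B(G)$. The bottom row is the Künneth/restriction-to-diagonal morphism used in Lemma \ref{lem:refined_product_support}, which defines $\bullet$. The right column is restriction along $s_d$ (pullback to the fiber product $\tilde i$) followed by integration along $s_\pi$ and $p$. Unwinding morphism (\ref{morph:image_cycle_push}), this is exactly $p\circ\oim{s}$ on cycles.

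Commutativity of the lemma's diagram then yields
\[
p\bigl(\oim{s}(\mathrm{CC}(F)\etens\mathrm{CC}(G))\bigr)=B(F)\bullet B(G),
\]
which is the claim.

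The main obstacle will be this last identification. I will need to check that the push-forward morphism (\ref{morph:image_cycle_push}) for $s$, composed with the projection $p$ of (\ref{def:the_b_function}), agrees with the right-hand vertical arrow of Lemma \ref{lem:commutative_diagram}. This includes compatibility of orientation sheaves: the identification $\omega_{E\times E}\simeq\omega_E\etens\omega_E$ must match $\epb{s}\omega_E$ on the fiber product.

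I would handle it by factoring $\oim{s}$ as the pullback along the closed embedding $s_d$ (the adjunction $\mathrm{id}\to\roim{\tilde i}\opb{\tilde i}$) followed by the integration $\reim{s_\pi}$. Each step is then matched with the corresponding arrow of the lemma, and properness is guaranteed throughout by the compactness of the supports.
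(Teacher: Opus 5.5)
Your proposal is essentially the paper's proof. It computes $\mathrm{CC}(F\star G)=s_*\bigl(\mathrm{CC}(F)\boxtimes\mathrm{CC}(G)\bigr)$ from the direct-image and exterior-product formulas, then uses a commutative diagram whose subdiagram (1) is Lemma~\ref{lem:commutative_diagram} and whose right column unwinds $s_*$ as restriction along $s_d$ followed by integration along $s_\pi$ and $p$. Two small adjustments are needed. First, apply the lemma with $X=E$ and $Y=E^*$, so that $(X\times X)\times Y=(E\times E)\times_s T^*E$. Second, the passage from that lemma's output to the $\bullet$-product is not an arrow of the lemma: it is the naturality of the adjunction isomorphism $Rp_!R\Gamma_W(\pi^{-1}\omega_E)\simeq R\mathcal{H}om(Rp_!k_W,k_{E^*})$ with respect to the morphism $u$ of Lemma~\ref{lem:refined_product_support}, which is the paper's subdiagram (2).
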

	\begin{proof}
		By equality \ref{eq:image_cycle}, we have:
		\eqn
		\begin{aligned}
			CC(F\star G) & = CC(\reim{s}(F\etens G))\\
			& = \oim{s}CC(F\etens G)\\
			& =  \oim{s}(\mathrm{CC}(F) \etens \mathrm{CC}(G))\\
		\end{aligned}
		\eneqn
		
		The external product $\mathrm{CC}(F) \etens \mathrm{CC}(G)$ takes its value in the $0$-th cohomology of $\rsect(T^*E;\rsect_{\on{SS}(F)}(\pi^{-1}\omega_E))\etens\rsect(T^*E;\rsect_{\on{SS}(G)}(\pi^{-1}\omega_E))$. 
		
				
		Noticing that $a_{E\times E}^{\pi_{E\times E}}\circ s_d=a_{E^*}^{\pi_{E^*}}\circ p \circ s_\pi$, and using the compacity of the support of $F$ and $G$, we have

	\begin{center}\label{dia:b_commutation_star_product}
		\resizebox{\textwidth}{!}{%
			\begin{tikzcd}[
				remember picture,
				ampersand replacement=\&, 
				column sep=2.8em,
				row sep=5em,
				cells={nodes={inner sep=1.2pt}},
				execute at end picture={
					\node[fit=(A)(B)(C)(D),inner sep=0pt] (boxone) {};
					\node at (boxone.center) {\(\displaystyle(1)\)};
					\node[fit=(E)(F)(G)(H),inner sep=0pt] (boxone) {};
					\node at (boxone.center) {\(\displaystyle(2)\)};
				}
				]
				{}
				\&
				\begin{aligned}
					\fcolorbox{blue}{white}{$id\times id\in$}\roim{a_{E}}\rhom(F,F) \\
					{}\etens \roim{a_{E}}\rhom(G,G)
				\end{aligned}
				\arrow[r]
				\arrow[d]
				\&
				\roim{a_{E\times E}}\rhom(F\etens G,F\etens G)
				\arrow[dd]
				\\
				{}
				\&
				\begin{aligned}
					\roim{a_E^{\pi_E}}\rsect_{\on{SS}(F)}\muhom(F,F)\etens{}\\
					\roim{a_E^{\pi_E}}\rsect_{\on{SS}(G)}\muhom(G,G)
				\end{aligned}
				\arrow[d,"\sim"]
				\arrow[dddl,bend right]
				\&
				{}
				\\
				{}
				\&
				\begin{aligned}
					\roim{a_{E\times E}^{\pi_{E\times E}}}\bigl(
					\rsect_{\on{SS}(F)}\muhom(F,F)\etens{}\\
					\rsect_{\on{SS}(G)}\muhom(G,G)\bigr)
				\end{aligned}
				\arrow[r]
				\arrow[dd]
				\&
				\begin{aligned}
					\roim{a_{E\times E}^{\pi_{E\times E}}}\,
					\rsect_{\on{SS}(F\etens G)}\\
					\muhom(F\etens G,F\etens G)
				\end{aligned}
				\arrow[d]
				\\
				{}
				\&
				\&
				\fcolorbox{blue}{white}{$CC(F\etens G)\in$}
				\begin{aligned}
					\roim{a_{E\times E}^{\pi_{E\times E}}}\,\rsect_{\on{SS}(F\etens G)}\\
					(\pi_{E\times E}^{-1}\omega_{E\times E})
				\end{aligned}
				\arrow[d,hook]
				\\
				|[alias=A]|
				\fcolorbox{blue}{white}{$CC(F)\tens_kCC(G)\in$}
				\begin{aligned}
					\roim{a_{E}^{\pi_{E}}}(\,\rsect_{\on{SS}(F)}\,(\pi_E^{-1}\omega_E))\tens_k\\
					\roim{a_{E}^{\pi_{E}}}(\rsect_{\on{SS}(G)}\,(\pi_E^{-1}\omega_E))
				\end{aligned}
				\arrow[r]
				\arrow[dddd]
				\&
				|[alias=B]|
				\begin{aligned}
					\roim{a_{E\times E}^{\pi_{E\times E}}}(\,\rsect_{\on{SS}(F)}\,(\pi_E^{-1}\omega_E)\etens{}\\
					\rsect_{\on{SS}(G)}\,(\pi_E^{-1}\omega_E))
				\end{aligned}
				\arrow[d]
				\arrow[r]
				\&
				\fcolorbox{blue}{white}{$CC(F)\etens CC(G)\in$}
				\begin{aligned}
					\roim{a_{E\times E}^{\pi_{E\times E}}}\,\rsect_{\on{SS}(F)\times\on{SS}(G)}\\
					(\pi_{E\times E}^{-1}\omega_{E\times E})
				\end{aligned}
				\arrow[d]
				\\
				{}
				\&
				\begin{aligned}
					\roim{a_{E\times E}^{\pi_{E\times E}}}\roim{s_d}\opb{s_d}\bigl(
					\rsect_{\on{SS}(F)}(\pi_E^{-1}\omega_E)\etens{}\\
					\rsect_{\on{SS}(G)}(\pi_E^{-1}\omega_E)\bigr)
				\end{aligned}
				\arrow[r]
				\arrow[d]
				\&
				\begin{aligned}
					\roim{a_{E\times E}^{\pi_{E\times E}}}\roim{s_d}\opb{s_d}\,
					\rsect_{\on{SS}(F)\times\on{SS}(G)}\\
					(\pi_{E\times E}^{-1}\omega_{E\times E})
				\end{aligned}
				\arrow[d]
				\\
				{}
				\&
				\begin{aligned}
					\roim{a_{E\times E}^{\pi_{E\times E}}}\roim{s_d}\bigl(
					\rsect_{\on{SS}(F)}(\pi_E^{-1}\omega_E)\etens_{E^*}\\
					\rsect_{\on{SS}(G)}(\pi_E^{-1}\omega_E)\bigr)
				\end{aligned}
				\arrow[r]
				\arrow[d,"\sim"']
				\&
				\begin{aligned}
					\roim{a_{E\times E}^{\pi_{E\times E}}}\roim{s_d}\,
					\rsect_{s_d^{-1}(\on{SS}(F)\times\on{SS}(G))}\\
					\bigl(s_d^{-1}\pi_{E\times E}^{-1}\omega_{E\times E}\bigr)
				\end{aligned}
				\arrow[d,"\sim"]
				\\
				{}
				\&
				|[alias=E]|
				\begin{aligned}
					\roim{a_{E^*}}\reim{p}\reim{s_\pi}\bigl(
					\rsect_{\on{SS}(F)}(\pi_E^{-1}\omega_E)\etens_{E^*}\\
					\rsect_{\on{SS}(G)}(\pi_E^{-1}\omega_E)\bigr)
				\end{aligned}
				\arrow[r]
				\&
				|[alias=F]|
				\begin{aligned}
					\roim{a_{E^*}}\reim{p}\reim{s_\pi}\,
					\rsect_{s_d^{-1}(\on{SS}(F)\times\on{SS}(G))}\\
					\bigl(s_d^{-1}\pi_{E\times E}^{-1}\omega_{E\times E}\bigr)
				\end{aligned}
				\arrow[d]
				\\
				|[alias=C]|
				\begin{aligned}
					\roim{a_{E^*}}\,(\reim{p}\rsect_{\on{SS}(F)}\,(\pi_E^{-1}\omega_E))\tens_k\\
					\,\roim{a_{E^*}}(\reim{p}\rsect_{\on{SS}(G)}(\pi_E^{-1}\omega_E))
				\end{aligned}
				\arrow[r]
				\arrow[d,"\sim"]
				\&
				|[alias=D]|
				\begin{aligned}
					\roim{a_{E^*}}\,(\reim{p}\rsect_{\on{SS}(F)}\,(\pi_E^{-1}\omega_E)\tens_{E^*}\\
					\,\reim{p}\rsect_{\on{SS}(G)}(\pi_E^{-1}\omega_E))
				\end{aligned}
				\arrow[d,"\sim"]
				\arrow[u,"\sim"]
				\&
				\fcolorbox{blue}{white}{$\oim{s}(CC(F)\etens CC(G))\in$}
				\begin{aligned}
					\roim{a_{E^*}}\reim{p}\,
					\rsect_{s_\pi s_d^{-1}(\on{SS}(F)\times\on{SS}(G))}\\
					(\pi_{E}^{-1}\omega_{E})
				\end{aligned}
				\arrow[d,"\sim"]
				\\
				\fcolorbox{blue}{white}{$B(F)\tens_k B(G)\in$}
				\begin{aligned}
					\roim{a_{E^*}}\bigl(\rhom(\reim{p}k_{\on{SS}(F)},k_{E^*})\bigr)\tens_k\\
					\roim{a_{E^*}}\bigl(\rhom(\reim{p}k_{\on{SS}(G)},k_{E^*})\bigr)
				\end{aligned}
				\arrow[r]
				\&
				\begin{aligned}
					\roim{a_{E^*}}\bigl(
					\rhom(\reim{p}k_{\on{SS}(F)},k_{E^*})\tens_{E^*}\\
					\rhom(\reim{p}k_{\on{SS}(G)},k_{E^*})\bigr)
				\end{aligned}
				\arrow[d]
				\arrow[r]
				\&
				|[alias=H]|
				\fcolorbox{blue}{white}{$B(F\star G)\in$}
				\begin{aligned}
					\roim{a_{E^*}}\,\rhom(
					\reim{p}k_{s_\pi s_d^{-1}(\on{SS}(F)\times\on{SS}(G))},\\
					k_{E^*})
				\end{aligned}
				\\
				{}
				\&
				|[alias=G]|
				\fcolorbox{blue}{white}{$B(F)\bullet B(G)\in$}
				\roim{a_{E^*}}\,\rhom(
				\reim{p}k_{s_\pi s_d^{-1}(\on{SS}(F)\times\on{SS}(G))}
				\arrow[ru,dash]
				\&
			\end{tikzcd}%
		}
	\end{center}

		The diagram (1) commutes by immediate application of Lemma \ref{lem:commutative_diagram}.

		To prove that the subdiagram \((2)\) commutes, set
		\[
		Z:=s_\pi s_d^{-1}\bigl(SS(F)\times SS(G)\bigr).
		\]
		For every closed conic subset \(W\subset T^*E\), denote by
		\[
		\beta_W:\ \reim{p}\rsect_W(\pi_E^{-1}\omega_E)\longrightarrow
		\rhom(Rp_!k_W,k_{E^*})
		\]
		the last morphism of \((3.2)\). By Lemma~3.2, there is a canonical morphism
		\[
		u:\reim{p}k_Z\longrightarrow \reim{p}k_{SS(F)}\tens \reim{p}k_{SS(G)}.
		\]
		Hence \((2)\) is essentially the functoriality of \(\beta_W\) with respect to the morphism \(u\).
		
		Finally, the image of \(id\in \hom(F\etens G,F\etens G)\) in \(\rsect(T^*E;\rsect_{s_\pi s_d^{-1}(\on{SS}(F)\times\on{SS}(G))}(\pi_E^{-1}\omega_E))\) by the chain of morphisms \eqref{morph:characteristic_cycle} is \(\oim{s}CC(F\etens G)\), which is precisely \(CC(F\star G)\).
	
	\end{proof}

	We have 
	\begin{proposition}
		Let $F$ be a sheaf in $\Derb_{\mathbb{R}-c}(k_E)$ with compact support and assume $F$ is an inversible sheaf with respect to the star operation. Then 
		$$
		B(F) \bullet B(F) = \bold{1}
		$$
	\end{proposition}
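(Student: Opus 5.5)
The plan is to combine Theorem~\ref{th:description_inverse_star}, Theorem~\ref{th:main_theorem_B_function} and Proposition~\ref{prop:property_B_function}. Since $F$ is invertible with compact support, Theorem~\ref{th:description_inverse_star} says its inverse is $G:=D_E(F^a)$, so that $F\star G\simeq k_{\{0\}_E}$. The sheaf $G$ is again constructible with compact support, because $\operatorname{supp} G = -\operatorname{supp} F$. Theorem~\ref{th:main_theorem_B_function} then gives
\[
B(F)\bullet B(G)=B(F\star G)=B(k_{\{0\}_E})=\bold{1},
\]
where the last equality is Proposition~\ref{prop:property_B_function}(ii). It therefore remains to prove the identity $B(D_E(F^a))=B(F)$.

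To show this, I will follow how each operation acts on characteristic cycles. The duality formula (\ref{prop:antipodal_lagrangian}), $CC(D_EH)=CC(H)^a$, involves only the fibrewise antipodal map $(x,\xi)\mapsto(x,-\xi)$ of $T^*E$. The antipodal map $a$ of $E$ is a linear isomorphism, so $f_d$ and $f_\pi$ are isomorphisms and the push-forward formula (\ref{eq:image_cycle}) applies. Its cotangent map is $(x,\xi)\mapsto(-x,-\xi)$, hence $CC(F^a)$ is the transport of $CC(F)$ under this map. Composing the two, $CC(D_E(F^a))$ is the transport of $CC(F)$ under $\sigma:(x,\xi)\mapsto(-x,\xi)$. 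Since $p\circ\sigma=p$, the set $SS(D_E(F^a))=\sigma(SS(F))$ satisfies $\reim{p}k_{\sigma(SS(F))}\simeq\reim{p}k_{SS(F)}$. This is exactly the setting in which $B$ lives.

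Next I will check that the morphism $p$ of (\ref{def:the_b_function}) commutes with $\sigma_*$. The point is that $\sigma$ covers the identity of $E^*$ and that $\reim{p}\circ\rsect_{\sigma(W)}\circ\sigma_*\simeq\reim{p}\circ\rsect_W$. From this I get $B(D_E(F^a))=B(F)$, and therefore $B(F)\bullet B(F)=\bold{1}$.

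The main obstacle is the orientation bookkeeping in this last step. The maps $\sigma$ and $a$ act on $\pi^{-1}\omega_E$, which carries $\ori_E$, and the isomorphism $\rsect_{SS(F)}(\pi^{-1}\omega_E)\simeq \rsect_{SS(F)}(\epb{p}k_{E^*})$ uses a relative orientation along the fibres of $p$. I must verify that the action of $x\mapsto -x$ on $\omega_E$ (a sign $(-1)^n$) is compensated in the identification $\epb{p}k_{E^*}\simeq\pi^{-1}\omega_E$, which is orientation along $E$ as well. I would do this by writing all these isomorphisms in a fixed linear chart and checking that the two factors $(-1)^n$ cancel, so that no global sign survives.
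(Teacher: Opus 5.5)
Your route is the paper's: identify the inverse as $G=D_E(F^a)$, use multiplicativity and $B(k_{\{0\}_E})=\mathbf{1}$ to get $B(F)\bullet B(G)=\mathbf{1}$, then replace $B(G)$ by $B(F)$. Your computation $CC(G)=\sigma_*CC(F)$ with $\sigma(x,\xi)=(-x,\xi)$ is a cleaner version of the paper's appeal to $B(D_EF)=B(F)^a$ and $B(F^a)=B(F)^a$. Your orientation worry is harmless: $p^!k_{E^*}\simeq\omega_E\boxtimes k_{E^*}$ and $\sigma=a\times\mathrm{id}_{E^*}$, so no sign appears.

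The genuine gap is the final ``therefore'', and the paper's one-line proof has exactly the same gap. What you establish is $B(G)=\sigma_*B(F)$, which identifies a section over $\sigma(SS(F))$ with one over $SS(F)$. It is not an equality in $H^0(E^*;\mathcal{L}^b_E)$, because the transition maps there remember where $\Lambda$ sits in the fibres of $p$. For $F=\delta_{x_0}$ you get $G=\delta_{-x_0}$, and $B(\delta_{x_0})\neq B(\delta_{-x_0})$ there. Nor is $\bullet$ compatible with transporting one factor by $\sigma$. It is built from $s_\pi(x,y,\xi)=(x+y,\xi)$, so $B(F)\bullet B(G)$ is carried by $Z_-=\{(x-y,\xi);\,(x,\xi),(y,\xi)\in SS(F)\}$, while $B(F)\bullet B(F)$ is carried by $Z_+=\{(x+y,\xi);\,(x,\xi),(y,\xi)\in SS(F)\}$. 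The two products are pullbacks of $B(F)\otimes B(F)$ along different maps $Rp_!k_{Z_\pm}\to Rp_!k_{SS(F)}\otimes Rp_!k_{SS(F)}$.

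With the definitions as written (equality in $H^0(E^*;\mathcal{L}^b_E)$), the conclusion actually fails. Take $F=\delta_{x_0}$ with $x_0\neq 0$, which is invertible with inverse $\delta_{-x_0}$. By the multiplicativity theorem, $B(F)\bullet B(F)=B(F\star F)=B(\delta_{2x_0})$, which is carried by $T^*_{2x_0}E$. For $\Lambda=T^*_0E\sqcup T^*_{2x_0}E$ one has $R\mathcal{H}om(Rp_!k_\Lambda,k_{E^*})\simeq k_{E^*}^{2}$. In it, $\mathbf{1}$ and $B(\delta_{2x_0})$ become $(1,0)$ and $(0,\pm1)$. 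They stay distinct for every larger isotropic $\Lambda'$: over generic $\xi\neq 0$ the fibre of $\Lambda'$ is discrete and the transition map is extension by zero.

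So what your argument legitimately proves is $B(F)\bullet\sigma_*B(F)=\mathbf{1}$. Equivalently, $B(F)\bullet^{-}B(F)=\mathbf{1}$ for the product $\bullet^{-}$ defined like $\bullet$ but with $(x,y,\xi)\mapsto(x-y,\xi)$ in place of $s_\pi$. If you want the statement with $\bullet$ itself, you must pass to a quotient of $\mathcal{L}^b_E$ that forgets fibre positions. For instance, when the fibres of $Z$ are acyclic, identify $R\mathcal{H}om(Rp_!k_Z,k_{E^*})$ with $k_{E^*}$ through the unit $k_{E^*}\to Rp_!k_Z$. There the maps attached to $Z_+$ and $Z_-$ agree, since both composites are the unit for $SS(F)\times_{E^*}SS(F)\to E^*$. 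Either way, this has to be formulated and checked; it does not follow from ``$B(G)=B(F)$''.
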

	\begin{proof}
		By \ref{th:description_inverse_star}, the inverse of $F$ is $D_E(F^a)$ and by \ref{prop:property_B_function}, $B(D_E(F^a))=B(F)$ and \( B(k_{\{0\}_E}) = \bold{1} \). Also by construction of the antipodal of $B$, we have $B(F^a) = B(F)^a$. The proposition follows immediately from \ref{th:main_theorem_B_function}.
	\end{proof}

\printbibliography
\end{document}